\newtheorem{definition}{Definition}
\newtheorem{lemma}{Lemma}[section]
\newtheorem{theorem}[lemma]{Theorem}
\newtheorem{cor}[lemma]{Corollary}
\newtheorem{claim*}{Claim}
\newtheorem{remark}[lemma]{Remark}
\newtheorem{example}[lemma]{Example}
\numberwithin{equation}{section}
\newcommand{\Hom}{\operatorname{Hom}}
\newcommand{\Sym}{\operatorname{Sym}}
\newcommand{\RR}{\mathbb{R}}
\newcommand{\HH}{\mathcal{H}}
\definecolor{cof}{RGB}{219,144,71}
\definecolor{pur}{RGB}{186,146,162}
\definecolor{greeo}{RGB}{91,173,69}
\definecolor{greet}{RGB}{52,111,72}
\begin{document}
\title{Semidefinite approximations of conical hulls of measured sets.}
\date{}
\author{Juli\'an Romero}
\address{Department of Combinatorics and Optimization \\
University of Waterloo\\
200 University Ave. W.\\ 
Waterloo, ON, N2L3G1, Canada. 
}
\email{jaromero@uwaterloo.ca}

\author{
Mauricio Velasco\\ 
}
\address{
Departamento de matem\'aticas\\
Universidad de los Andes\\
Carrera $1^{\rm ra}\#18A-12$\\ 
Bogot\'a, Colombia
}
\email{mvelasco@uniandes.edu.co}
\subjclass[2000]{Primary 52A27 
Secondary 90C25.} 
\keywords{Approximation of convex bodies, Spectrahedra, SDR sets}

\begin{abstract} Let $C$ be a proper convex cone generated by a compact set which supports a measure $\mu$. A construction due to A.Barvinok, E.Veomett and J.B. Lasserre produces, using $\mu$, a sequence $(P_k)_{k\in \mathbb{N}}$ of nested spectrahedral cones which contains the cone $C^*$ dual to $C$. We prove convergence results for such sequences of spectrahedra and provide tools for bounding the distance between $P_k$ and $C^*$. These tools are especially useful on cones with enough symmetries and allow us to determine bounds for several cones of interest. We compute such upper bounds for semidefinite approximations of cones over traveling salesman polytopes and for cones of nonnegative ternary sextics and quaternary quartics. 
\end{abstract}

\maketitle

\section{Introduction}
One of the main problems of convex optimization is the determination of the maximum value of a linear function over a convex set. Despite its reputation as the class of all ``tractable" optimization problems, such special cases contain an enormous variety of instances of very different complexity. Specifically, work of Gr\"{o}tschel, L\'{o}vasz and Schrijver~\cite{GLS} shows that the ability to approximate optima of linear programs on a class of convex sets in polynomial time to a prescribed accuracy is equivalent to the existence of a (weak) polynomial time membership oracle for this class. For many classes of convex sets, for instance the copositive cones~\cite{DG} or the cones of nonnegative polynomials~\cite{HL}, such membership problems have been shown to be NP hard. Nevertheless, optimizing over such complicated cones $C$ is often a problem of much interest. A possible alternative is to sacrifice precision for efficiency. We replace our convex set $C$ by a simpler convex set $C'$ which is a good approximation for $C$ in a suitable sense and such that efficient optimization over $C'$ is possible. Natural choices for such $C'$ are polyhedra and more generally spectrahedra or SDR sets. Approximations by such convex sets are of practical importance due to the availability of efficient interior point optimization algorithms~\cite{BN} on them.

The problem of how to construct such approximations $C'$ has been studied by several authors. There is considerable literature in the problem of  approximating convex bodies by polytopes (see for instance~\cite{Gruber} for a survey) as well as important work by Gouveia, Lasserre, Laurent, Parrilo, Thomas and others (see~\cite{P}, \cite{L3}, \cite{L2}, \cite{L4}) who propose approximation schemes for convex semialgebraic sets by SDR sets based on sums of squares relaxations.

In this article, we study another approximation strategy due to Barvinok and Veomett~\cite{BV} and Lasserre~\cite{L5}. Their construction allows us to approximate arbitrary convex sets which support a measure via a sequence of SDR sets. This article contains three main contributions concerning this approximation strategy. First, we give general conditions under which such sequences converge to the desired convex set. Second, we prove that in the presence of enough symmetries the speed of convergence (as measured by a scaling factor to be defined precisely)  can be determined by solving a semidefinite program. Third, we give explicit upper bounds for the scaling factors of these sequences on two special classes of convex sets: traveling salesman polytopes and cones of nonnegative polynomials in several variables. Such bounds are necessary if one intends to optimize over these cones following the approximation strategy outlined earlier and also have considerable mathematical interest since they are natural invariants of the cones.
 
In the remainder of this introduction we describe our results and the organization of the article in detail. We begin by defining some terminology. Let $V$ be a real finite-dimensional vector space and let $X$ be a compact topological space with a finite Borel measure $\mu$ supported on $X$ (i.e. such that the $\mu$-measure of every nonempty open set of $X$ is strictly positive). 
\begin{definition} A pair $(\phi,g)$ where $\phi: X\rightarrow V$ is a continuous function and $g:V\rightarrow \RR$ is a linear function is called admissible if the affine hull of $\phi(X)$ coincides with $g^{-1}(1)$. For an admissible pair $(\phi,g)$ we let $C:={\rm Cone}(\phi(X))$. \end{definition}
Note that $C$ is a proper cone with a distinguished interior point $\overline{x}:=\int_X \phi d\mu$.
\begin{definition} If $D\subseteq V$ is a cone then the dual cone $D^*$ is the set of elements $\ell\in V^*$ such that $\ell(d)\geq 0$ for every $d\in D$. The linear function $g$ is a distinguished interior point of the proper cone $C^*$ dual to $C$.
\end{definition}
The main objects of interest in this article are the convex cones $C$ and $C^*$ coming from admissible pairs. The following examples show that several interesting cones arise in this manner,
 
\begin{example}\label{Ex: guide} Fix a positive integer $n$,
\begin{enumerate}
\item{Let $X$ be the set of hamiltonian cycles in cities $1,\dots,n$ and let $\mu$ be the uniform measure. Let $\phi: X\rightarrow V\subseteq \Sym^2(\RR^n)$ 
be the map sending a hamiltonian cycle to its adjacency matrix and let $V$ be the subspace spanned by the images of all hamiltonian cycles. Let $g:V\rightarrow \RR$ be the map sending a matrix to $\frac{1}{2n}$ times the sum of its entries. In this case, $C$ is a cone over the Symmetric Traveling salesman polytope on the complete graph $K_n$.
}
\item{Let $d$ be a positive integer, let $X:=S^{n-1}$ be the unit sphere in $\RR^n$ and let $\mu$ be its normalized surface measure. Let $\phi: X\rightarrow \Sym^{2d}(\RR^n)$ be the map sending a point $v\in S^{n-1}$ to $v^{2d}$. Let $g: \Sym^{2d}(\RR^n)\rightarrow \RR$ be the unique linear map which sends $v^{2n}$ to $|v|^{2n}$. In this case $C^*$ is the cone of nonnegative homogeneous polynomials of degree $2d$ in $n$ variables. }
\item{Let $X$ be the intersection of the unit sphere $S^{n-1}\subseteq \RR^n$ and the non-negative orthant in $\RR^n$ and let $\mu$ be the restriction to $X$ of the normalized surface measure of the sphere.  Let $\phi: X\rightarrow \Sym^2(\RR^n)$ be the map sending $v$ to $v^2$. Let $g: \Sym^{2}(\RR^n)\rightarrow \RR$ be the unique linear map which sends $v^{2}$ to $|v|^{2}$. In this case $C^*$ is the cone of copositive quadratic forms.}
\end{enumerate}
\end{example}

As in the examples above, the exact determination of the cones $C$ and $C^*$ may be difficult. The following construction was introduced by Barvinok and Veomett~\cite{BV} (and is implicit in independent work by Lasserre~\cite{L3}) as a method to systematically construct approximations of $C^*$ and $C$ by spectrahedra  and SDR sets respectively. See also~\cite{V} for applications of this construction to multilinear optimization.
\begin{definition}\label{Def: BVL} Let $\mathcal{F}$ be a vector space of continuous, real valued functions on $X$. To $\lambda\in V^*$  we can associate a bilinear symmetric form $Q_\lambda: \mathcal{F}\times \mathcal{F}\rightarrow \RR$ via the formula
\[Q_\lambda(p,q)=\int_X \lambda(\phi(u)) p(u)q(u)d\mu(u).\]
Let $\Phi: V^*\rightarrow \Sym^2(\mathcal{F})^*$ be the linear map given by $\Phi(\lambda)=Q_{\lambda}.$
The BVL approximation of $C^*$ determined by $\mathcal{F}$, denoted $C^*(\mathcal{F})$, is the spectrahedral cone $\Phi^{-1}(S_+)$ where $S_+\subseteq  \Sym^2(\mathcal{F})^*$ is the cone of positive semidefinite quadratic forms on $\mathcal{F}$.
\end{definition}
It is immediate from the definition that the following statements hold,
\begin{enumerate}
\item{ For any $\mathcal{F}$ we have $C^*\subseteq C^*(\mathcal{F})$ and $C\supseteq C^*(\mathcal{F})^*$. We denote the SDR set $C^*(\mathcal{F})^*$ by $C(\mathcal{F})$.}
\item{ If $\mathcal{F}, \mathcal{G}$ are subspaces of real valued functions with $\mathcal{G}\subseteq \mathcal{F}$ then $C^*(\mathcal{G})\supseteq C^*(\mathcal{F})$ and $C(\mathcal{G})\subseteq C(\mathcal{F})$.}
\end{enumerate}
As the space of functions $\mathcal{F}$ becomes larger the spectrahedron $C^*(\mathcal{F})$ becomes smaller and the SDR set $C(\mathcal{F})$ larger. It is natural to ask whether by choosing sequences of vector spaces $\mathcal{F}_j$ appropriately we can make the sequences of cones $C^*(\mathcal{F}_j)$ and $C(\mathcal{F}_j)$ converge, in a suitable sense, to $C^*$ and to $C$. Our first result, proven in Section~$\S$\ref{Sec: Convergence}, shows that this happens under rather general hypotheses, generalizing~\cite[Lemma 3.1]{V}.
\begin{theorem} \label{Thm: Convergence} Assume $X$ is a compact Hausdorff topological space. Suppose that $(\mathcal{F}_j)_{j\in \mathbb{N}}$ are an increasing sequence of vector subspaces of the algebra of continuous functions on $X$ with the uniform norm. If $\bigcup_{j=1}^{\infty}\mathcal{F}_j$ is a subalgebra which separates points and contains the constant functions then the following equalities hold, 
\[
\begin{array}{ccc}
\bigcap_{j=1}^{\infty} C^*(\mathcal{F}_j)=C^* & \text{ and } & \overline{\bigcup_{j=1}^{\infty} C(\mathcal{F}_j)}=C.\\
\end{array}
\]
\end{theorem}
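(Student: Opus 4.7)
My plan is to reduce the first equality to Stone--Weierstrass by unwinding the definition of $C^*(\mathcal{F})$, and then deduce the second equality by a standard bipolar duality argument. The crucial reformulation is that $\lambda\in C^*(\mathcal{F})$ means exactly that the symmetric form $Q_\lambda$ is positive semidefinite on $\mathcal{F}$, i.e.\ $\int_X (\lambda\circ\phi)(u)\,p(u)^2\,d\mu(u)\geq 0$ for every $p\in\mathcal{F}$. Hence $\lambda\in\bigcap_j C^*(\mathcal{F}_j)$ if and only if the same inequality holds for every $p$ in the union $\mathcal{A}:=\bigcup_j \mathcal{F}_j$.

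To prove $\bigcap_j C^*(\mathcal{F}_j)\subseteq C^*$, I would fix such a $\lambda$ and show that the continuous function $f:=\lambda\circ\phi$ is nonnegative on $X$. By the hypothesis $\mathcal{A}$ is a subalgebra of $C(X)$ that separates points and contains the constants, so by the Stone--Weierstrass theorem it is uniformly dense in $C(X)$. Given an arbitrary nonnegative $h\in C(X)$, the function $\sqrt{h}$ is also continuous and can be approximated uniformly by a sequence $p_n\in\mathcal{A}$; then $p_n^2\to h$ uniformly, and since $|f|$ is bounded and $\mu$ is finite, we may pass the limit through the integral to obtain $\int_X f\,h\,d\mu\geq 0$. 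Because this holds for every nonnegative continuous $h$, the continuous function $f$ cannot be strictly negative on any nonempty open subset of $X$; the assumption that $\mu$ charges every nonempty open set then forces $f\geq 0$ pointwise, i.e.\ $\lambda\in C^*$. The reverse containment $C^*\subseteq\bigcap_j C^*(\mathcal{F}_j)$ was already observed in the remarks following Definition~\ref{Def: BVL}, which completes the first equality.

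For the second equality, I note that $C=\mathrm{Cone}(\phi(X))$ is closed because $\phi(X)$ is compact and contained in the affine hyperplane $g^{-1}(1)$ which avoids the origin; consequently the inclusion $\overline{\bigcup_j C(\mathcal{F}_j)}\subseteq C$ follows from $C(\mathcal{F}_j)\subseteq C$. For the reverse inclusion I dualize: set $D:=\overline{\bigcup_j C(\mathcal{F}_j)}$, a closed convex cone. Since each $C(\mathcal{F}_j)=C^*(\mathcal{F}_j)^*$ is the dual of a closed cone, its own dual is $C^*(\mathcal{F}_j)$, and the dual of a union of cones is the intersection of their duals, so $D^*=\bigcap_j C^*(\mathcal{F}_j)=C^*$ by the first part. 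Applying the bipolar theorem to the closed convex cones $C$ and $D$ yields $D=D^{**}=(C^*)^*=C$, as required.

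The main obstacle in this plan is the chain of approximations in the second paragraph: getting from ``the integral against every test square in $\mathcal{A}$ is nonnegative'' to ``$\lambda\circ\phi$ is pointwise nonnegative'' requires both the Stone--Weierstrass density (to produce approximants of $\sqrt{h}$ by algebra elements, not merely by linear combinations) and the full-support hypothesis on $\mu$ (to promote almost-everywhere nonnegativity of a continuous function to everywhere nonnegativity). Once this step is in hand, the rest of the argument is formal.
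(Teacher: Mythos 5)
Your proposal is correct and follows essentially the same path as the paper's proof: Stone--Weierstrass density of $\bigcup_j\mathcal{F}_j$ in $C(X)$ plus the full-support hypothesis on $\mu$ for the first equality, and cone bipolar duality for the second. The only stylistic difference is that the paper argues the contrapositive (given $\lambda\notin C^*$, build a Urysohn bump $p$ with $\int\lambda(\phi)p^2\,d\mu<0$, then approximate $p$ by elements of the algebra), whereas you argue directly (given $\lambda$ in the intersection, approximate $\sqrt{h}$ for an arbitrary nonnegative $h$, conclude $\int(\lambda\circ\phi)\,h\,d\mu\ge0$, then use full support and a Urysohn test function to get pointwise nonnegativity); both variants use the same two ingredients, so this is the same argument in different clothing.
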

\begin{remark} If $\phi$ is one-to-one and $\mathcal{F}_j$ is the pullback of the homogenous polynomials of degree $j$ in $V$ to $X$ via $\phi$ then the vector spaces $\mathcal{F}_j$ satisfy the hypotheses of the Theorem. This is the original hierarchy studied by Veomett~\cite{V} for traveling salesman polytopes on the complete graph.
\end{remark}
Knowing that convergence does occur, the next step is to ask how quickly does convergence happen. To make this question meaningful it is necessary to have a quantitative measure of the ``distance'' between $C^*(\mathcal{F})$ and $C^*$ and between $C(\mathcal{F})$ and $C$ respectively. To define this quantity we need to fix bases for the relevant cones, 

\begin{definition} Let $\Lambda:=\{v\in V: g(v)=1\}$ and $\Lambda^{\vee}:=\{\ell\in V^*: \ell(\overline{x})=1\}$. Define $B:=C\cap \Lambda$, $B(\mathcal{F}):=C(\mathcal{F})\cap \Lambda$, $B^{\vee}:=C^*\cap \Lambda^{\vee}$ and $B^{\vee}(\mathcal{F}):=C^*(\mathcal{F})\cap \Lambda^{\vee}$. Note that $B^{\vee}(\mathcal{F})\supseteq B^{\vee}$ and that $B(\mathcal{F})\subseteq B$.
\end{definition}

\begin{definition}The scaling constant of $\mathcal{F}$, denoted $\alpha(\mathcal{F})$, is the infimum of the set of real numbers $\alpha$ such that the following two equivalent inclusions hold,
\[
\begin{array}{ccc}
\frac{1}{\alpha}\left(B^{\vee}(\mathcal{F})-g\right) \subseteq \left(B^{\vee}-g\right) & \text{ and } & \left(B-\overline{x}\right)\subseteq \alpha\left(B(\mathcal{F})-\overline{x}\right)
\end{array}
\]
 if this set is nonempty and equals infinity otherwise.
\end{definition}

Note that $\alpha(\mathcal{F})\geq 1$ and that $\alpha(\mathcal{F})=1$ if and only if the equalities $B^{\vee}(\mathcal{F})=B^\vee$ and $B(\mathcal{F})=B$ hold. Scaling constants are useful because they allow us to bound the maximum value of any linear function on the convex set $B^\vee$ (resp. on $B$) in terms of its maximum value on the spectrahedron $B^{\vee}(\mathcal{F})$ (resp. on the SDR set $B(\mathcal{F})$). More specifically, the following inequalities hold for any linear functions $f: V^*\rightarrow \RR$ and $h: V\rightarrow \RR$.
\[ \frac{1}{\alpha(\mathcal{F})}\left(\sup_{x\in B^{\vee}(\mathcal{F})}f(x)-f(g)\right)\leq \sup_{x\in B^{\vee}}f(x)-f(g) \leq \sup_{x\in B^{\vee}(\mathcal{F})}f(x)-f(g) \text{,}\]
\[ \sup_{x\in B(\mathcal{F})}h(x)-h(\overline{x})\leq \sup_{x\in B}h(x)-h(\overline{x}) \leq \alpha(\mathcal{F})\left(\sup_{x\in B(\mathcal{F})}h(x)-h(\overline{x})\right).\]

Our next result, proven in Section~$\S$\ref{Sec: Scaling}, shows that scaling constants are easily computable whenever $X$ has enough symmetries. To describe this concept precisely we need to introduce some additional terminology,

\begin{definition} \label{Def: enoughSymm} Let $H$ be a subgroup of the continuous automorphisms of $X$. We say that $H$ has enough symmetries if the following conditions hold:
\begin{enumerate}
\item{ The elements of $H$ preserve the measure $\mu$.}
\item{ The action of $H$ on $X$ is transitive.}
\item{ There exists a homomorphism $\Psi: H\rightarrow \Hom(V,V)$ such that for every $u\in X$ and $h\in H$ the equality $\Psi(h)(\phi(u))=\phi(h(u))$ holds.}
\item{ For every $f\in \mathcal{F}$ and $h\in H$ the function $f\circ h$ is an element of $\mathcal{F}$.}
\end{enumerate}
\end{definition}
The situation of having a subgroup $H$ with enough symmetries applies to the cones $(1)$ and $(2)$ in Example~\ref{Ex: guide}, but not to cone $(3)$.
\begin{theorem} \label{Thm: Scaling}The following statements hold:
\begin{enumerate}
\item{\label{Scaling} For any vector space $\mathcal{F}$ of real valued functions on $X$, the scaling constant is given by
\[ \alpha(\mathcal{F})=1-\inf_{\lambda \in B^{\vee}(\mathcal{F})}\inf_{x\in B} \lambda(x).\]
}
\item{\label{SymmScaling} Let $H$ be a subgroup of the continuous automorphisms of $X$.
\begin{enumerate}
\item{ If $H$ has enough symmetries then for any point $u_0\in X$ we have
\[\alpha(\mathcal{F})=1-\inf_{\lambda \in  B^{\vee}(\mathcal{F})}\lambda(\phi(u_0)).\]
}
\item{ If $H'\subseteq H$ is a compact subgroup which fixes $u_0$ and such that $\Psi_{|H'}$ is continuous then 
\[\alpha(\mathcal{F})=1-\inf_{\lambda \in  B^{\vee}(\mathcal{F})\cap W^{\vee}}\lambda(\phi(u_0)).\]
where $W^{\vee}\subseteq V^*$ is the subspace consiststing of linear forms $\ell$ such that $\ell\circ \Psi(h)=\ell$ for every $h\in H'$. 
}
\end{enumerate}
}
\end{enumerate}
\end{theorem}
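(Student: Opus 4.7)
The plan is to attack the three statements in order, as each builds on the previous.

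\textbf{Part (1).} I will unwind the definition of $\alpha(\mathcal{F})$ via its first equivalent inclusion. A real number $\alpha$ is admissible precisely when, for every $\lambda\in B^\vee(\mathcal{F})$, the point $g+\frac{1}{\alpha}(\lambda-g)$ lies in $B^\vee$. Membership in $\Lambda^\vee$ is automatic, and membership in $C^*$ is equivalent to nonnegativity on the generators $\phi(u)$ of $C$. Since $g(\phi(u))=1$, the condition reduces to $\alpha\geq 1-\lambda(\phi(u))$ for every $u\in X$. Because $B$ is the convex hull of $\phi(X)$ and $\lambda$ is linear, $\inf_{x\in B}\lambda(x)=\inf_{u\in X}\lambda(\phi(u))$; taking suprema over $u$ and infima over $\lambda$ yields the formula.

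\textbf{Part (2)(a).} With Part (1) in hand, the goal is to show that the inner infimum over $x\in B$ can always be reached at $\phi(u_0)$ at the cost of ranging $\lambda$ over all of $B^\vee(\mathcal{F})$. By transitivity, any $u\in X$ equals $h(u_0)$ for some $h\in H$, so $\lambda(\phi(u))=\lambda(\Psi(h)\phi(u_0))=(\lambda\circ\Psi(h))(\phi(u_0))$. The key step is that $\lambda\circ\Psi(h)\in B^\vee(\mathcal{F})$. For the normalization, $\Psi(h)(\overline{x})=\int_X \phi(h(u))\,d\mu(u)=\overline{x}$ by the $H$-invariance of $\mu$, so $(\lambda\circ\Psi(h))(\overline{x})=1$. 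For positive semidefiniteness, the change of variables $v=h(u)$ in the integral defining $Q_{\lambda\circ\Psi(h)}$ gives
\[ Q_{\lambda\circ\Psi(h)}(p,q)=Q_\lambda(p\circ h^{-1},q\circ h^{-1}), \]
and condition (4) of Definition~\ref{Def: enoughSymm} guarantees $p\circ h^{-1},q\circ h^{-1}\in\mathcal{F}$, so $Q_{\lambda\circ\Psi(h)}\succeq 0$ follows from $Q_\lambda\succeq 0$. Rewriting the double infimum from Part (1) in terms of the pair $(\lambda,h)$ and using that $\lambda\mapsto\lambda\circ\Psi(h)$ preserves $B^\vee(\mathcal{F})$ collapses it to $\inf_{\lambda}\lambda(\phi(u_0))$.

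\textbf{Part (2)(b).} I will further reduce the infimum to $B^\vee(\mathcal{F})\cap W^\vee$ by averaging: for any $\lambda\in B^\vee(\mathcal{F})$, set
\[ \tilde{\lambda}:=\int_{H'}(\lambda\circ\Psi(h))\,d\nu(h), \]
where $\nu$ is the normalized Haar measure on the compact group $H'$. The continuity of $\Psi_{|H'}$ and compactness of $H'$ make this Bochner integral well defined. Then $\tilde\lambda\in W^\vee$ by left-invariance of $\nu$; it lies in $B^\vee(\mathcal{F})$ because this set is closed and convex and $\tilde\lambda$ is a limit of convex combinations of the points $\lambda\circ\Psi(h)\in B^\vee(\mathcal{F})$ (which are admissible by Part (2)(a)); and $\tilde\lambda(\phi(u_0))=\lambda(\phi(u_0))$ because $h(u_0)=u_0$ for every $h\in H'$. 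Hence $\inf_{\lambda\in B^\vee(\mathcal{F})}\lambda(\phi(u_0))=\inf_{\lambda\in B^\vee(\mathcal{F})\cap W^\vee}\lambda(\phi(u_0))$, completing the proof.

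The main obstacle is the verification in Part (2)(a) that $\lambda\circ\Psi(h)\in C^*(\mathcal{F})$, which hinges essentially on condition (4) of having enough symmetries and on a clean change-of-variables computation inside the definition of $Q_\lambda$; the remaining statements follow by carefully tracking the definitions and exploiting the measure-theoretic and group-theoretic hypotheses.
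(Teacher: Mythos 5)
Your proof is correct and follows essentially the same route as the paper's: unwind the inclusion condition via nonnegativity on the generators $\phi(u)$, show that $\lambda\mapsto\lambda\circ\Psi(h)$ preserves $B^{\vee}(\mathcal{F})$ using the change-of-variables argument and conditions (1), (3), (4) of Definition~\ref{Def: enoughSymm}, and then average over the Haar measure on $H'$ to land in $W^{\vee}$. Your write-up is marginally tidier in two spots — replacing $\inf_{x\in B}$ by $\inf_{u\in X}$ directly rather than assuming a minimizer of the form $\phi(u')$ exists, and averaging an arbitrary $\lambda$ in Part (2)(b) instead of the paper's putative minimizer $\eta$ — but the argument is the same.
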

In particular, if there exists a subgroup $H$ having enough symmetries then the scaling constants can be computed by semidefinite programming and under additional symmetries these programs can be simplified considerably.

Section~$\S$\ref{Sec: Applications} is devoted to applications of these ideas and contains the main results of the article. 

In Section~\ref{Sec: TSP} we study the scaling constants of BVL hierarchies on traveling salesman cones. Specifically, for a graph $G$ on $n$-vertices let $X$ be the set of hamiltonian cycles of $G$ endowed the uniform measure $\mu$ and let $\phi: X\rightarrow {\rm Sym}^2(\RR^n)$ be the map sending a cycle to its adjacency matrix. Define $V$ and $g$ as in Example~\ref{Ex: guide} $(1)$ and let $C:={\rm Cone}(\phi(X))$ be the traveling salesman cone of $G$. Letting $\mathcal{F}_k$ be the restriction, via $\phi$, of the homogeneous polynomials of degree $k$ in ${\rm Sym}^2(\RR^n)$, definition~\ref{Def: BVL} gives a hierarchy of SDR sets 
\[ C(\mathcal{F}_k)\subseteq C.\]
In Section~\ref{Sec: TSP} we develop a general framework for bounding the scaling constants $\alpha(\mathcal{F}_k)$ of this hierarchy for sufficiently symmetric graphs $G$. We then apply this framework to complete graphs and to complete bipartite graphs. Our main result is, 
\begin{theorem} \label{Thm: TSP} The following inequalities hold,
\begin{enumerate}
\item{ If $G$ is the complete graph on $n$ vertices and $k\in \{1,2,\dots , \lfloor\frac{n}{2}\rfloor\}$ then
\[
\alpha(\mathcal{F}_k)\leq 1+\frac{(n-1)}{2}\frac{\sum_{i=k}^{\lfloor \frac{n}{2} \rfloor\wedge 2k}{i \choose k}\beta_{i}}{\sum_{i=k}^{\lfloor \frac{n}{2} \rfloor\wedge 2k}{i \choose k}(\alpha_i-\beta_i)}\leq \frac{n}{k}+\frac{10}{n}
\]
where  $\alpha_i$ and $\beta_i$ are the explicit constants given in Equation (\ref{Eq:Pol_Veo_2}). 
}
\item{ If $G$ is the complete bipartite graph on two sets of $n$ vertices  and $k\in \{1,2,\dots,n\}$ then
\[
\alpha(\mathcal{F}_k)\leq 1+\frac{n}{2}\frac{\sum_{i=k}^{n\wedge 2k}{i \choose k}\eta_{i}}{\sum_{i=k}^{n\wedge 2k}{i \choose k}(\gamma_i-\eta_i)}\leq \frac{2 n}{k}+\frac{2 (k+1)}{k (2n-k-3)}
\]
where $\gamma_k$ and $\eta_k$ are the explicit constants given by equations (\ref{Eq:Bipar1}) and (\ref{Eq:Bipar2}).
}
\end{enumerate}
\end{theorem}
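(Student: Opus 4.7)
The plan is to reduce each estimate to a symmetry-reduced semidefinite program and then apply SDP duality. For $G=K_n$, let $H=S_n$ act on hamiltonian cycles by relabelling vertices. This action preserves the uniform measure $\mu$, is transitive on cycles, and lifts to simultaneous row--column permutation on $\Sym^2(\RR^n)\supseteq V$; since $\mathcal{F}_k$ is the pullback under $\phi$ of degree-$k$ polynomials on $\Sym^2(\RR^n)$, it is preserved by precomposition, so $H$ has enough symmetries in the sense of Definition~\ref{Def: enoughSymm}. Fix the base cycle $u_0=1\to 2\to\cdots\to n\to 1$, whose stabilizer in $S_n$ is the dihedral group $H'=D_n$; $\Psi|_{H'}$ is automatically continuous since $H'$ is finite. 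Theorem~\ref{Thm: Scaling}(2)(b) then gives
\[ \alpha(\mathcal{F}_k)=1-\inf\{\lambda(\phi(u_0))\,:\,\lambda\in B^{\vee}(\mathcal{F}_k)\cap W^{\vee}\},\]
where $W^\vee$ is the space of $D_n$-invariant linear forms on $V$.

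\textbf{Dual certificate.} To upper bound $\alpha(\mathcal{F}_k)$ we must lower bound the primal infimum; by SDP weak duality it is enough to exhibit a $D_n$-invariant PSD element $M\in\Sym^2(\mathcal{F}_k)$ whose pairings with the data $\phi(u_0)$ and $\overline{x}$ reproduce the claimed ratio. A $D_n$-invariant functional on $\Sym^2(\RR^n)$ is determined by its values on the cyclic-distance orbits of entries, so the dual variables are parametrized by $\lfloor n/2\rfloor+1$ scalars subject to one linear normalization and the PSD constraint on $\mathcal{F}_k$. The candidate $M$ I propose is the $D_n$-symmetrization of $p\otimes p$ for a polynomial $p\in\mathcal{F}_k$ whose monomials correspond to $k$-subsets of the edges of $u_0$. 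Expanding $\Phi^*(M)$ and regrouping by the number $i$ of edges that a variable hamiltonian cycle shares with $u_0$ yields a $D_n$-invariant combination whose coefficients are precisely the moments $\alpha_i,\beta_i$ of equation (\ref{Eq:Pol_Veo_2}), each weighted by $\binom{i}{k}$ to count the relevant $k$-subsets; substitution then produces the first (rational) inequality in part (1).

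\textbf{Asymptotic estimate and bipartite case.} To pass from the rational bound to $\frac{n}{k}+\frac{10}{n}$, I estimate the sums over $i\in\{k,\dots,\lfloor n/2\rfloor\wedge 2k\}$ by singling out the dominant $i=k$ term. Because a random hamiltonian cycle on $K_n$ rarely shares many edges with $u_0$, the binomial-weighted denominator behaves like $\binom{k}{k}(\alpha_k-\beta_k)$ to leading order, and the multiplier $(n-1)/2$ then supplies the leading $n/k$; the remaining terms and combinatorial corrections are absorbed by the additive $10/n$, a constant chosen generously enough to hold uniformly for $k\leq\lfloor n/2\rfloor$. The complete bipartite case runs along the same lines with $H=(S_n\times S_n)\rtimes \mathbb{Z}/2$ acting on hamiltonian cycles of $K_{n,n}$: the stabilizer of a canonical $2n$-cycle is dihedral of order $4n$, the invariant-form parameters are indexed by $n$ orbits respecting the bipartition, and the moments $\gamma_i,\eta_i$ of (\ref{Eq:Bipar1})--(\ref{Eq:Bipar2}) enumerate bipartite cycles sharing exactly $i$ edges with the base cycle; the same asymptotic expansion then yields $\tfrac{2n}{k}+\tfrac{2(k+1)}{k(2n-k-3)}$.

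\textbf{Main obstacle.} The delicate step is verifying that the candidate $M$ is PSD on every $D_n$-isotypic component of $\mathcal{F}_k$, not only on the trivial component where positivity is manifest. This requires decomposing $\mathcal{F}_k$ under $D_n$ and tracking the cross-block contributions of the symmetrized $p\otimes p$. The need to account for these cross-terms is precisely what forces the denominator $\alpha_i-\beta_i$, rather than $\alpha_i$, in the stated bound and supplies the most intricate combinatorial part of the proof.
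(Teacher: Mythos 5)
Your high-level plan (symmetry reduction, then a dual sum-of-squares certificate) is the right shape, and the appearance of the $\binom{i}{k}$ weights is on target, but the route you describe misplaces the real work and mislocates the difficulty.

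\textbf{The ``main obstacle'' you flag is not an obstacle.} The paper works through the dual formulation (Theorem~\ref{Thm: ScalingDual} and Corollary~\ref{CorTSP}): $\alpha(\mathcal{F}_k)=\inf_{s\in\Sigma(\phi(u_0))}\int_X s\,d\mu$, so an upper bound comes from exhibiting \emph{any} explicit sum of squares $s$ lying in $\Sigma(\phi(u_0))$. For $K_n$ the choice is
\[
\hat{s}_k(x)=\sum_{\Gamma\in M}\Bigl(\sum_{\substack{L\subset\Gamma\\|L|=k}}x^L\Bigr)^2,
\]
a sum over maximum matchings $M$ of $u_0$; being manifestly a sum of squares, its Gram matrix is PSD with no need to decompose $\mathcal{F}_k$ into $D_n$-isotypic components and check each block. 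The quantity $\alpha_i-\beta_i$ in the denominator is not a correction for cross-block contributions: by Lemma~\ref{TwoValues} an element of $\Sigma(\phi(u_0))$ must have edge-moments taking exactly two values $\alpha,\beta$ with $\alpha-\beta=1$, so the factor $\alpha_i-\beta_i$ is simply the normalization needed to rescale the unnormalized $\hat{s}_k$ into $\Sigma(\phi(u_0))$; this is precisely Corollary~\ref{CorTSP}.

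\textbf{The candidate certificate also needs to be chosen more carefully than ``$k$-subsets of the edges of $u_0$''.} The two-valuedness of $\int_X x_{ij}\,s\,d\mu$ on $ij\notin u_0$ is a genuine constraint (the $D_n$-orbits of non-$u_0$ edges of $K_n$ are indexed by cyclic distance, so $D_n$-invariance of $s$ alone does not force it), and the paper's construction secures it by summing over maximum matchings $\Gamma\subset u_0$: this gives the algebraic identity $\hat{s}_k=\sum_{i=k}^{\lfloor n/2\rfloor\wedge 2k}\binom{i}{k}s_i$ expressing $\hat{s}_k$ in terms of the Veomett polynomials $s_i$, which are already known (Equation~(\ref{Eq:Pol_Veo_2})) to have the required two-valued moments $(\alpha_i,\beta_i)$. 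If instead $p$ ranges over all $k$-subsets of $E(u_0)$, the monomials of $p^2$ are no longer sub-matchings and this identity fails; you would then need a separate argument that the resulting edge-moments are still two-valued, which you have not supplied.

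\textbf{The passage to $\tfrac{n}{k}+\tfrac{10}{n}$ is not a dominant-term estimate.} The paper observes the term-by-term inequality $\tfrac{n-1}{2}\,\tfrac{\beta_i}{\alpha_i-\beta_i}\leq\tfrac{n}{k}+\tfrac{10}{n}-1$ for every $i\geq k$ (inherited from Veomett's bound) and uses the elementary fact that a positively weighted mediant of fractions each $\leq c$ is itself $\leq c$. No asymptotic singling-out of $i=k$ is involved, and the additive $10/n$ is Veomett's constant rather than something ``chosen generously''. The $K_{n,n}$ case (Theorem~\ref{Thm:Knn} and Remark~\ref{Rm:Knn}) is structurally identical, with $u_0$ split into two perfect matchings and the moments $\gamma_i,\eta_i$ computed directly from Lemma~\ref{Lem:PathDecom}.
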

\begin{remark}Theorem~\ref{Thm: TSP} part $(1)$ improves on earlier upper bounds by Veomett~\cite{BVstsp}.\end{remark}

In Section~$\S$\ref{Sec: P} we study the scaling constants for BVL hierarchies on cones of nonnegative polynomials. We fix positive integers $n$ and $d$ and let $X:=S^{n-1}$ be the unit sphere in $\RR^n$ with normalized surface measure $\mu$ and define $g$ as in Example~\ref{Ex: guide} $(2)$. The cone $C^*\subseteq \Sym^{2d}(\RR^n)^*$ consists of nonnegative polynomials of degree $2d$ in $n$ variables. Let $\mathcal{F}_k$ be the vector space of homogeneous polynomials of degree $2k$ in $n$ variables restricted to $X$. Such spaces specify a sequence of spectrahedra 
\[C^*(\mathcal{F}_k)\supseteq C^*\]
and we study its basic properties. In Lemma~\ref{Lemma: basicNonnegative} we give explicit formulas for the matrices defining the spectrahedra $C^*(\mathcal{F}_k)$ and prove that these spectrahedra converge to $C^*$. 

By a Theorem of Hilbert, the cone $C^*$ equals the cone of sums of squares of forms of degree $d$ if and only if either $n\leq 2$ or $d\leq 1$ or $(n,d)=(3,2)$ (see~\cite{BJAMS} for a modern proof). It follows that in these cases the cone $C^*$ has a simple spectrahedral description. In all other cases $C^*$ is not a spectrahedron and it is interesting to use the spectrahedral approximations $C^*(\mathcal{F}_k)$. The simplest cases of interest are thus ternary sextics $(n,d)=(3,3)$ and quaternary quartics $(n,d)=(4,2)$. Our first result result gives (numerical) upper bounds for the scaling constants $\alpha(\mathcal{F}_k)$,

\begin{theorem} \label{Thm: NumBounds}The following tables contain numerically computed upper bounds for the scaling constants $\alpha(\mathcal{F}_k)$ (see Figure~\ref{boundsQS}).
\begin{enumerate}
\item{for quaternary quartics $(n,d)=(4,2)$
\[
\begin{tiny}
\begin{array}{|c|c|c|c|c|c|c|c|c|c|c|c|}
\hline
k & \alpha(\mathcal{F}_k)\leq & k & \alpha(\mathcal{F}_k)\leq & k & \alpha(\mathcal{F}_k)\leq  & k & \alpha(\mathcal{F}_k)\leq & k & \alpha(\mathcal{F}_k)\leq & k & \alpha(\mathcal{F}_k)\leq  \\
\hline
2 & 2.755044 & 3 & 1.949091 & 4 & 1.607291 & 5 & 1.425842 & 6 & 1.316627 & 7 & 1.245305\\ 
8 & 1.195964 & 9 & 1.160319 & 10 & 1.133685 & 11 & 1.113236 & 12 & 1.097181 & 13 & 1.084338\\ 
14 & 1.073898 & 15 & 1.065294 & 16 & 1.058117 & 17 & 1.052067 & 18 & 1.046919 & 19 & 1.042501\\ 
\hline
\end{array}
\end{tiny}
\]
}
\item{and for ternary sextics $(n,d)=(3,3)$
\[
\begin{tiny}
\begin{array}{|c|c|c|c|c|c|c|c|c|c|c|c|}
\hline
k & \alpha(\mathcal{F}_k)\leq & k & \alpha(\mathcal{F}_k)\leq & k & \alpha(\mathcal{F}_k)\leq  & k & \alpha(\mathcal{F}_k)\leq & k & \alpha(\mathcal{F}_k)\leq & k & \alpha(\mathcal{F}_k)\leq  \\
\hline
3 & 2.668980 & 4 & 2.055565 & 5 & 1.746356 & 6 & 1.526781 & 7 & 1.387487 & 8 & 1.315492\\ 
9 & 1.259626 & 10 & 1.213191 & 11 & 1.176739 & 12 & 1.154268 & 13 & 1.134184 & 14 & 1.116618\\ 
15 & 1.102053 & 16 & 1.091873 & 17 & 1.082356 & 18 & 1.073801 & 19 & 1.066682 & 20 & 1.061063\\ 
\hline
\end{array}
\end{tiny}
.\]

}
\end{enumerate}
\end{theorem}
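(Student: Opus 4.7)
The plan is to turn the evaluation of $\alpha(\mathcal{F}_k)$ into a semidefinite program, use the $O(n)$-symmetry of the sphere to reduce it to a tiny SDP, and then solve the reduced SDP with a numerical solver to fill in the table.

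First, I would verify that $H = O(n)$ acts on $X = S^{n-1}$ with enough symmetries in the sense of Definition~\ref{Def: enoughSymm}: rotations preserve normalized surface measure, act transitively, the representation $\Psi(h)(v^{2d}) := (hv)^{2d}$ on $V = \Sym^{2d}(\RR^n)$ intertwines $\phi$, and $\mathcal{F}_k$ is preserved under pullback since the pullback of a degree-$2k$ form is again of degree $2k$. Applying Theorem~\ref{Thm: Scaling}(2)(a) with $u_0 = e_n$ converts $\alpha(\mathcal{F}_k)$ into $1 - \min p(e_n)$, where $p \in \RR[x_1,\dots,x_n]_{2d}$ ranges over forms with $\int_{S^{n-1}} p\, d\mu = 1$ and such that the bilinear form $(f,g)\mapsto \int_{S^{n-1}} p\, fg\, d\mu$ is positive semidefinite on $\mathcal{F}_k$.

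Next, I would apply Theorem~\ref{Thm: Scaling}(2)(b) with $H' = O(n-1)$, the stabilizer of $e_n$, which is compact with continuous action on $V$. Classical invariant theory identifies $W^\vee \cap \RR[x]_{2d}$ with the $(d+1)$-dimensional subspace spanned by $\{x_n^{2(d-i)} r^{2i}\}_{i=0}^{d}$, where $r^2 = x_1^2 + \cdots + x_{n-1}^2$, so the minimization is over only $d+1$ scalar variables: the normalization reduces to a single linear equation in them, and the objective simply reads off the coefficient of $x_n^{2d}$. To make the PSD constraint manageable I would block-diagonalize $Q_p$ by splitting $\mathcal{F}_k$ into its $O(n-1)$-isotypic components. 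Since $p$ is itself $O(n-1)$-invariant, Schur's lemma forces $Q_p$ to be block-diagonal with respect to this decomposition, each block being an $\RR$-linear combination of $d+1$ fixed matrices whose entries are integrals of products of spherical harmonics against $r^{2i} x_n^{2(d-i)}$ on $S^{n-1}$. These integrals are computable in closed form, for example via the Funk--Hecke formula, so for each $k$ one obtains a small SDP with $d+1$ decision variables and a handful of PSD blocks whose sizes grow polynomially in $k$.

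Finally, for $(n,d)=(4,2)$ and $(n,d)=(3,3)$ and $k$ in the stated ranges, I would feed the resulting SDPs to a standard numerical solver and record the outputs in the table, rounding upward so that each reported value is a rigorous upper bound on $\alpha(\mathcal{F}_k)$. The main obstacle is not conceptual but bookkeeping: the isotypic decomposition of $\mathcal{F}_k$ under $O(n-1)$ and the closed-form integrals of spherical harmonics must be assembled correctly, since any mistake would invalidate the entire table. Once that machinery is in place, each entry of the two tables is produced by solving a single modest-size semidefinite program.
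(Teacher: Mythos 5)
Your high-level plan coincides with the paper's up to the point where you apply Theorem~\ref{Thm: Scaling}(2)(b) to reduce the decision variable $\lambda$ to the $(d+1)$-dimensional space of $O(n-1)$-invariant forms; that part is exactly what is done. The divergence is in how the positive-semidefiniteness constraint $Q_\lambda \succeq 0$ on $\mathcal{F}_k$ is treated. You propose to block-diagonalize $Q_\lambda$ completely along the $O(n-1)$-isotypic decomposition of $\mathcal{F}_k$ and solve the resulting exact SDP. The paper does something strictly weaker: it \emph{relaxes} the constraint by requiring $\int_X \lambda\, q^2\, d\mu \geq 0$ only for $O(n-1)$-invariant $q\in\mathcal{F}_k$, which amounts to keeping only the single block attached to the trivial isotypic component and discarding all others. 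That block has size roughly $k+1$, which is why the paper's SDPs grow linearly in $k$ and are easy to solve up to $k\approx 20$.

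The consequence is that your computation would not reproduce the stated tables. Dropping the non-trivial blocks enlarges the feasible set of $\lambda$, lowers the infimum $\inf_\lambda \lambda(\phi(u_0))$, and therefore raises $1-\inf$, so the paper's reported numbers are relaxed (larger) upper bounds. Your exact block-diagonalized SDP would return the true $\alpha(\mathcal{F}_k)$, giving a table of smaller numbers. So your route is mathematically sound and in fact sharper, but it proves a stronger statement than the one being claimed and cannot be used to ``fill in the table'' as you write. If your goal is to reproduce the theorem as stated, you must implement the paper's deliberate relaxation to the trivial isotypic block; if your goal is the best possible bound, your approach is the right one but the price is substantially larger SDP blocks (the full isotypic decomposition for $n=4$, $k=19$ is thousands of variables rather than tens), which is precisely the cost the authors chose to avoid.
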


\begin{figure}[h]
\label{boundsQS}
\includegraphics[scale=0.7]{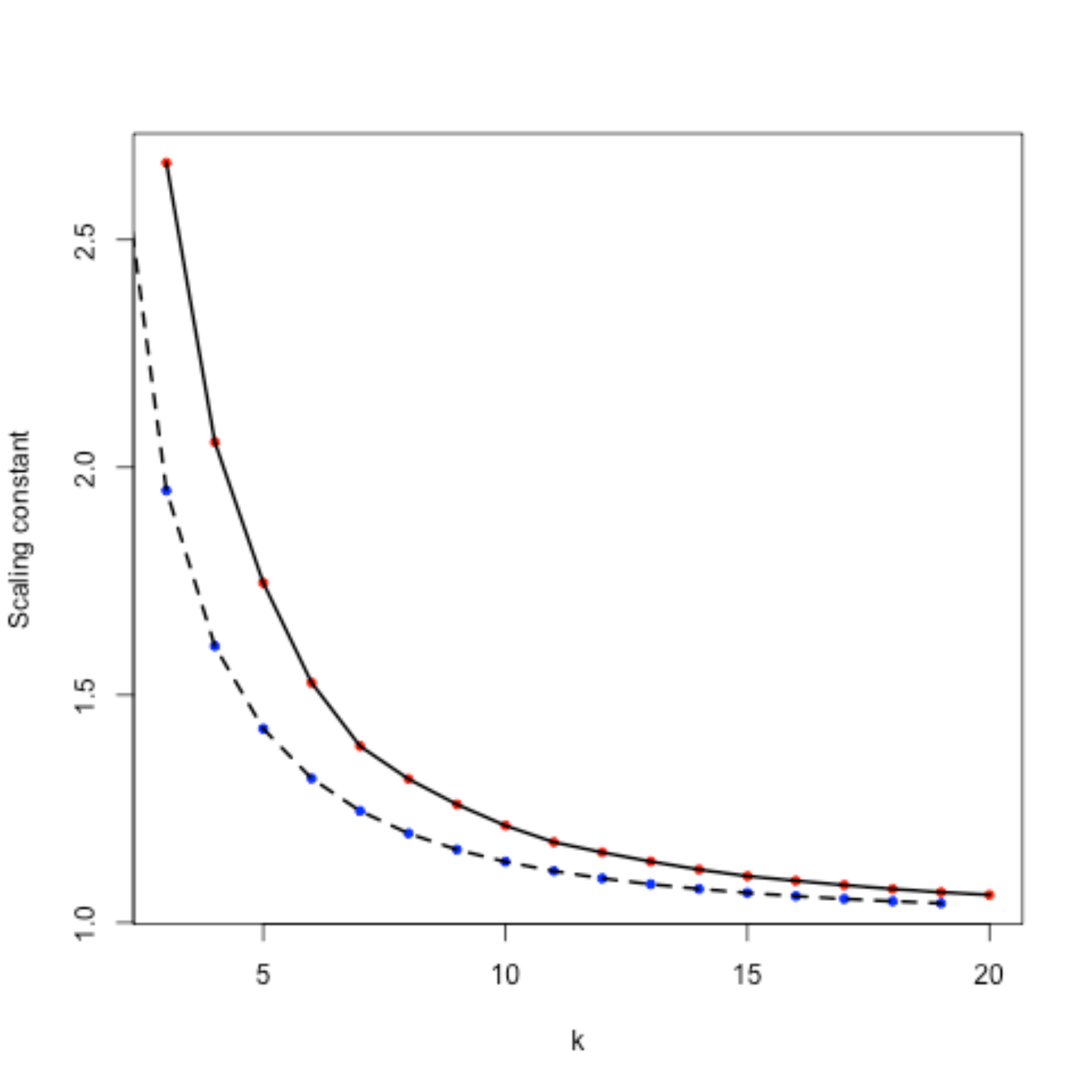}
\caption{Upper bounds for $\alpha(\mathcal{F}_k)$ for quaternary quartics (dashed line) and ternary sextics (continuous line). }
\end{figure}

Next we derive a method for constructing upper bounds for $\alpha(\mathcal{F}_k)$  for any $(n,d)$ with $k\geq d$. To describe the method we need to introduce some additional notation. Let $h_0:=1$ and for $m\geq 1$ let $h_m:=\binom{n+m-1}{n-1}-\binom{n+m-3}{n-1}$ and for an integer $j>0$ let $L_j(x)$ be the Legendre polynomial of degree $j$. The appearance of Legendre polynomials, the key element for the upper bounds obtained in this section, is a natural consequence of symmetry (see Section~\ref{Sec: P} for details).

\begin{theorem} \label{Thm: PSDbound} For $k\geq d$ let $R_k$ be the set of polynomials of the form 
\[q(x_1)=\sum_{j=0}^{d}h_{2j}L_{2j}(x_1)+\sum_{j=d+1}^{k} b_j L_{2j}(x_1)\text{ with $b_j\in \mathbb{R}$}\] 
where $L_j$ denotes the Legendre polynomial of degree $j$. The following inequality holds,
\[\alpha(\mathcal{F}_k)\leq 1-\sup_{q\in R_k}\min_{x_1\in [-1,1]} q(x_1).\]
In particular, any polynomial $q \in R_k$ gives an upper bound
\[ \alpha(\mathcal{F}_k)\leq 1-\min_{x_1\in [-1,1]} q(x_1).\] 
\end{theorem}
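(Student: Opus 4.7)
The plan is to combine the symmetric reduction of Theorem~\ref{Thm: Scaling}(2)(b) with harmonic analysis on the sphere and a sums-of-squares certificate. The orthogonal group $O(n)$ acts on $X=S^{n-1}$ with enough symmetries (Example~\ref{Ex: guide}(2)), and the stabilizer $H':=O(n-1)$ of $u_0:=e_1$ is compact with $\Psi|_{H'}$ continuous, so Theorem~\ref{Thm: Scaling}(2)(b) gives
\[\alpha(\mathcal{F}_k) = 1 - \inf_{\lambda \in B^\vee(\mathcal{F}_k)\cap W^\vee}\lambda(\phi(e_1)),\]
where $W^\vee\subseteq \Sym^{2d}(\RR^n)^*$ consists of the $O(n-1)$-invariant linear forms. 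For any such $\lambda$ the pullback $\tilde\lambda(v):=\lambda(v^{2d})$ is an $O(n-1)$-invariant polynomial on $S^{n-1}$ of degree $\le 2d$ and therefore depends only on $v_1$: write $\tilde\lambda(v)=q_\lambda(v_1)$ with $q_\lambda$ an even polynomial of degree at most $2d$.

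Next I would establish the \emph{reproducing identity}
\[\int_{S^{n-1}} q_\lambda(v_1)\, q(v_1)\, d\mu(v) \;=\; q_\lambda(1) \;=\; \lambda(\phi(e_1))\]
for every $q\in R_k$. The polynomials $L_j(v_1)$ (normalized by $L_j(1)=1$) are, up to the factor $h_j$, the zonal spherical harmonics at $e_1$ on $S^{n-1}$: $Z_j:=h_jL_j$ is the reproducing kernel of the harmonic space of degree $j$, so $\int L_iL_j\,d\mu = h_i^{-1}\delta_{ij}$. Since $\deg q_\lambda \le 2d$, expand $q_\lambda=\sum_{j=0}^d c_j L_{2j}$. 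The harmonic terms of $q$ of degree $> 2d$ contribute zero by orthogonality, while for $j\le d$ the prescribed coefficient $h_{2j}$ on $L_{2j}$ in $q$ makes that contribution equal to $c_j\cdot h_{2j}\int L_{2j}^2\,d\mu = c_j$. Summing gives $\int q_\lambda q\,d\mu = \sum_{j=0}^d c_j = q_\lambda(1)$.

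The main technical step, and the expected hard part, is the sums-of-squares certificate. Let $m:=\min_{x_1\in[-1,1]}q(x_1)$ so that $r(t):=q(t)-m\ge 0$ on $[-1,1]$, and define the nonnegative degree-$4k$ homogenization
\[\hat r(v) := |v|^{4k}\,r(v_1/|v|),\qquad \hat r|_{S^{n-1}} = q(v_1)-m.\]
The claim is that $\hat r\in \Sigma^2\mathcal{F}_k$. Starting from the Markov--Luk\'acs representation $r(t) = p_1(t)^2 + (1-t^2)p_2(t)^2$ with $\deg p_1\le k$, $\deg p_2\le k-1$ and $p_1,p_2$ of definite parity (possible since $r$ is even), substitute $t=v_1/|v|$, multiply by $|v|^{4k}$, and use $1-t^2=(v_2^2+\cdots+v_n^2)/|v|^2$ to get
\[\hat r = |v|^{4k}\,p_1(v_1/|v|)^2 + \sum_{i\ge 2} v_i^2\,|v|^{4k-2}\,p_2(v_1/|v|)^2.\]
Each summand is then rewritten as a sum of squares of homogeneous polynomials of degree $2k$. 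When the parities of $p_i$ and of $k$ are compatible, $|v|^{k}p_i(v_1/|v|)$ is already a polynomial and provides a single square; otherwise (for instance when $k$ is odd and $p_1$ is odd) the identity $|v|^2=v_1^2+\cdots+v_n^2$ is used to distribute an additional factor $v_j^2$ and regroup each piece into a square of a degree-$2k$ form. Carrying this bookkeeping through in all parity cases is the main obstacle.

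Finally, since $\lambda\in C^*(\mathcal{F}_k)$ implies that integration against $\tilde\lambda$ is nonnegative on $\Sigma^2\mathcal{F}_k$, the preceding steps and the normalization $\int\tilde\lambda\,d\mu=\lambda(\overline x)=1$ give
\[0 \;\le\; \int \tilde\lambda\,\hat r\,d\mu \;=\; \int \tilde\lambda\,(q(v_1)-m)\,d\mu \;=\; \lambda(\phi(e_1)) - m.\]
Hence $\lambda(\phi(e_1))\ge m$ for every feasible $\lambda$, so $\alpha(\mathcal{F}_k)\le 1 - m$; taking the supremum over $q\in R_k$ yields $\alpha(\mathcal{F}_k)\le 1-\sup_{q\in R_k}\min_{x_1\in[-1,1]}q(x_1)$.
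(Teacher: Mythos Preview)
Your argument is correct and tracks the paper's proof step for step: reduce via Theorem~\ref{Thm: Scaling}(2)(b) to $O(n-1)$-invariant $\lambda$, use the reproducing property of the zonal harmonics to obtain $\int_{S^{n-1}}\tilde\lambda\,q\,d\mu=\lambda(\phi(e_1))$, produce a sums-of-squares certificate for $q(v_1)-m$ in $\Sigma^2\mathcal{F}_k$, and conclude from $\lambda\in C^*(\mathcal{F}_k)$ together with $\int\tilde\lambda\,d\mu=1$.

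The one genuine difference is in the certificate. The paper does not construct it by hand: it notes that $Q(x)-\beta s^k$ is a nonnegative $S(e_1)$-invariant form of degree $2k$ and simply quotes \cite[Lemma~6.1]{BC}, which asserts that every such form is a sum of squares. Your route through Markov--Luk\'acs, the substitution $t=v_1/|v|$, and the parity bookkeeping is effectively a self-contained proof of that lemma in the axially symmetric setting; it buys independence from the external reference at the cost of the case analysis you flag as ``the main obstacle.'' Your ``definite parity'' claim is in fact justified: write $r(t)=R(t^2)$, apply Luk\'acs to $R$ on $[0,1]$, and substitute back to obtain either $p_1$ even and $p_2$ odd or $p_1$ odd and $p_2$ even, after which your homogenization (together with the splitting $|v|^2=\sum_j v_j^2$ to absorb any leftover odd power of $|v|$) yields squares of forms of degree exactly $2k$, hence elements of $\mathcal{F}_k$ upon restriction to $S^{n-1}$.
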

As a result, we obtain the following upper bound for any $(n,d)$ and $k=d$,
\begin{cor} \label{cor: PSD1} The following inequality holds, 
\[\alpha(\mathcal{F}_d)\leq 1+\frac{1}{(1-\gamma^2)^{\frac{1}{4}}}\left( \sum_{j=0}^d h_{2j} \sqrt{\frac{4}{\pi(4j+1)}}\right)\]
where $\gamma$ is the biggest root of the polynomial $L_{2d}(x_1)$. The root $\gamma$ is known to satisfy
\[\gamma=\cos\left(\frac{\beta}{\sqrt{4d^2+2d+1/3}}\left(1-\frac{\beta^2-2}{360(4d^2+2d+1/3)^2}\right)\right)+O(d^{-7})\]
where $\beta\approx 2.4048$ is the first positive root of the Bessel function $J_0(x)$.
\end{cor}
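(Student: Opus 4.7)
The plan is to apply Theorem~\ref{Thm: PSDbound} with $k=d$, in which case the set $R_d$ has no free parameters and consists of the single polynomial $q(x_1):=\sum_{j=0}^{d} h_{2j} L_{2j}(x_1)$. The corollary therefore reduces to proving the pointwise lower bound
\[
q(x_1)\;\geq\; -\,\frac{1}{(1-\gamma^2)^{1/4}}\sum_{j=0}^d h_{2j}\sqrt{\frac{4}{\pi(4j+1)}}
\]
for every $x_1\in[-1,1]$. I would prove this in two stages: a localization step that identifies a subinterval containing every potential minimizer of $q$, followed by a pointwise size estimate on that subinterval via Bernstein's inequality.

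For the localization step, I would first observe that each $h_{2j}$ is the dimension of a space of spherical harmonics and is therefore strictly positive, and that by the classical interlacing property of the zeros of Legendre polynomials the largest zero of $L_{2j}$ lies strictly to the left of $\gamma$ whenever $j<d$. Since $L_{2j}(1)=1$, this forces $L_{2j}>0$ on $[\gamma,1]$ for $j<d$, while $L_{2d}\geq 0$ on $[\gamma,1]$ by the definition of $\gamma$. Summing with positive weights gives $q>0$ on $[\gamma,1]$, and the evenness of $q$ extends this to $q>0$ on $[-1,-\gamma]$. Consequently any point at which $q$ could be negative lies in $[-\gamma,\gamma]$, and it suffices to bound $q$ from below on this subinterval.

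For the size estimate, I would invoke Bernstein's classical inequality for Legendre polynomials, $|L_n(x)|\leq \sqrt{2/(\pi(n+1/2))}\,(1-x^2)^{-1/4}$ on $(-1,1)$, which specializes for even index to $|L_{2j}(x_1)|\leq \sqrt{4/(\pi(4j+1))}\,(1-x_1^2)^{-1/4}$. The triangle inequality then gives $|q(x_1)|\leq C\,(1-x_1^2)^{-1/4}$ with $C:=\sum_{j=0}^d h_{2j}\sqrt{4/(\pi(4j+1))}$, and on $[-\gamma,\gamma]$ the factor $(1-x_1^2)^{-1/4}$ is dominated by $(1-\gamma^2)^{-1/4}$, which yields the claimed lower bound. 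The stated asymptotic expansion for $\gamma$ is a classical result on the largest zero of $L_{2d}$ expressed in terms of the first positive zero $\beta$ of $J_0$, and I would cite it from Szeg\H{o}'s monograph on orthogonal polynomials. The only genuine obstacle in the argument is the localization step, which depends essentially on the nonnegativity of all the coefficients $h_{2j}$: without it the positivity of each $L_{2j}$ on $[\gamma,1]$ would not propagate to $q$ and one would be forced to estimate $|q|$ on the whole interval $[-1,1]$, where the Bernstein factor $(1-x_1^2)^{-1/4}$ blows up at the endpoints.
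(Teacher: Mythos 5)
Your argument matches the paper's proof essentially step for step: both instantiate Theorem~\ref{Thm: PSDbound} at $k=d$ (so that $R_d$ reduces to the single polynomial $q=\sum_{j=0}^d h_{2j}L_{2j}$), apply the Bernstein-type estimate $|L_{2j}(x_1)|\leq (1-x_1^2)^{-1/4}\sqrt{4/(\pi(4j+1))}$, and localize the minimizer of $q$ to $[-\gamma,\gamma]$ by combining the interlacing of Legendre zeros with the positivity of the coefficients $h_{2j}$ before pushing the envelope to the worst point $x_1=\gamma$. The only difference is cosmetic: the paper attributes the asymptotic expansion for $\gamma$ to Gatteschi rather than to Szeg\H{o}'s monograph.
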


\subsection*{Acknowledgements}
We wish to thank Grigoriy Blekherman for several insightful conversations during the completion of this project. M. Velasco was partially supported by the FAPA funds from Universidad de los Andes.

\section{Convergence results for BVL hierarchies.}\label{Sec: Convergence}
In this section we prove the results about convergence of BVL hierarchies and scaling constants described in the introduction. Theorem~\ref{Thm: ScalingDual} gives a dual point of view on scaling constants which will be useful in Section~\ref{Sec: Applications}.

\begin{proof}[Proof of Theorem \ref{Thm: Convergence}] Since for every index $j$ the inclusion $C^*\subseteq C^*(\mathcal{F}_j)$  holds we have $C^*\subseteq \bigcap C^*(\mathcal{F}_j)$. If $\lambda\not\in C^*$ then there exists a point $c\in C$ such that $\lambda(c)<0$ and thus there exists $u\in X$ such that $\lambda(\phi(u))=\epsilon<0$. Let $U:=\{x\in X: \lambda(\phi(u))<\frac{\epsilon}{2}\}.$ The set $U$ is open and nonempty. Since $X$ is a normal topological space, the set $U$ contains a nonempty open set $A$ such that $A\subseteq \overline{A}\subseteq U$ and By Urysohn's Lemma there exists a continuous function $p: X\rightarrow [0,1]$ such that $p(\overline{A})=1$ and $p(X\setminus U)=0$. It follows that 
\[ \int_X \lambda(\phi(u))p(u)^2d\mu(u)=:\beta \leq \epsilon\mu(A)<0\] 
Now, $\bigcup \mathcal{F}_j$ is an algebra which separates points and contains the constants and $X$ is a compact Hausdorff topological space. Thus, by the Stone-Weierstrass Theorem the algebra $\bigcup \mathcal{F}_j$ it is dense in the algebra of continuous functions on $X$ with the uniform norm. In particular there exists a sequence of functions $p_k\in \mathcal{F}_{j_k}$ which converges uniformly to $p$. As a result
\[ \lim_{k\rightarrow \infty} \int_X \lambda(\phi(u))p_k^2(u)d\mu(u)=\beta<0\]  
and thus there is an index $j_s$ such that $\lambda\not\in C^*(\mathcal{F}_{js})$ proving the claimed equality. The equality $\overline{\bigcup_{j=1}^{\infty} C(\mathcal{F}_j)}=C$ follows immediately from bi-duality for proper cones.
\end{proof}
\begin{remark} The previous Theorem extends~\cite[Theorem 3.2]{L3} where $\phi$ is assumed to be scalar valued and $\mathcal{F}_k$ are assumed to be homogeneous polynomials of degree $k$. It also extends~\cite[Lemma 3.1]{V} where $\phi$ is assumed to be a polynomial map.
\end{remark}
\begin{remark} The key argument in the proof of the above Lemma lies in trying to approximate the Dirac $\delta_p$ measure centered at a point  $p$ with sums of squares of elements of $\mathcal{F}$. It follows from the above argument that if $\delta$ can be represented exactly by a sum of squares of elements of $\mathcal{F}$ then the equality  $C^*=C^*(\mathcal{F})$ holds. As an interesting consequence, if $X$ is finite and $\mathcal{F}_k$ is the set of polynomials of degree $k$ restricted to $X$ via $\phi$ then $C^*=C^*(\mathcal{F}_k)$ for some integer $k$ generalizing~\cite[Section 1.3]{BV}. This occurs because every function on a finite set is represented by a polynomial. The integer $k$ is bounded above by the Castelnuovo-Mumford regularity of the variety $\phi(X)$ (see~\cite[Section 20.5]{Eisenbud}).
\end{remark}

\begin{remark} One of the advantages of the setup of this article is that the spaces of fuctions $\mathcal{F}_j$ are now intrinsic to $X$ and do not depend on the function $\phi$. In particular a sequence of subspaces satisfying the hypothesis in the above Theorem can be used for constructing a converging sequence of approximations for the cone induced by any admissible pair $(\phi,g)$. 
\end{remark}

\subsection{Scaling constants}\label{Sec: Scaling}
\begin{proof}[Proof of Theorem \ref{Thm: Scaling}] (\ref{Scaling}) For a positive real number $s$, the inclusion $\frac{1}{s}(B^{\vee}(\mathcal{F})-g)\subseteq \left(B^{\vee}-g\right)$ holds if and only if for every $\lambda\in B^{\vee}(\mathcal{F})$ we have
\[\frac{1}{s}(\lambda-g)+g=\frac{1}{s}(\lambda+(s-1)g)\in B^{\vee}.\]
This condition holds iff the linear function $\beta:=\frac{1}{s}(\lambda+(s-1)g)\in C^*$ because $\beta(\overline{x})=1$. This occurs for every $\lambda\in B^{\vee}(\mathcal{F})$ if and only if $s-1\geq -\inf_{\lambda \in B^{\vee}(\mathcal{F})}\inf_{x\in B} \lambda(x)$. It follows that $\alpha(\mathcal{F})$ is given by the formula above. 
(\ref{SymmScaling}a) Since $H$ has enough symmetries there is a linear representation $\Psi$ of $H$ on $V$. This representation induces a linear action of $H$ on $V^*$ via $\Psi^*(h)(\lambda)(x)=\lambda(\Psi(h)(x))$. We claim that if $H$ has enough symmetries then for every $h\in H$ and $\lambda\in B^{\vee}(\mathcal{F})$ we have $\psi^*(h)(\lambda)\in B^{\vee}(\mathcal{F})$. This is because for any $p\in \mathcal{F}$ we have
\[\int_X \Psi^*(h)(\lambda(\phi(u))p(u)^2d\mu(u) = \int_X \lambda(\Psi(h)(\phi(u)))p(u)^2d\mu(u)=\int_X \lambda(\phi(h(u)))p(u)^2d\mu\]
where the second equality holds by Definition~\ref{Def: enoughSymm}, property $(3)$. Now, by Definition~\ref{Def: enoughSymm} property $(1)$, the last integral equals
\[\int_X \lambda(\phi(y))p(h^{-1}(y))^2d\mu(y)\geq 0\]
which is nonnegative because $\lambda\in B^{\vee}(\mathcal{F})$ and because $p\circ h^{-1}$ is an element of $\mathcal{F}$ by Definition~\ref{Def: enoughSymm} property $(4)$. It follows that $\Psi^*(h)(\lambda)\in C^*(\mathcal{F})$. Moreover $\Psi^*(h)(\lambda(\overline{x}))=\lambda(\Psi(h)(\overline{x}))=\lambda(\overline{x})=1$ showing that $\Psi^*(h)(\lambda)\in \Lambda^{\vee}$ proving the claim. Finally if $u_0$ is any point of $X$ and $\lambda\in B^{\vee}(\mathcal{F})$ achieves its minimum on $B$ at a point $\phi(u')$ for some $u'\in X$ then, by Definition~\ref{Def: enoughSymm} property $(2)$, there exists $h\in H$ such that $h(u_0)=u'$. Now, $\Psi^*(h)(\lambda)(\phi(u_0))=\lambda(\phi(h(u_0)))=\lambda(\phi(u'))$ and since $\Psi^*(h)(\lambda)\in B^{\vee}(\mathcal{F})$ we have 
\[\inf_{\lambda \in B^{\vee}(\mathcal{F})}\inf_{x\in B} \lambda(x)=\inf_{\lambda \in  B^{\vee}(\mathcal{F})}\lambda(\phi(u_0))\]
obtaining the claimed formula for the scaling constant.
$(\ref{SymmScaling}b)$ Assume $H'\subseteq H$ is a locally compact subgroup which fixes $u_0$ and such that $\Psi_{|H'}$ is continuous. Let $\eta\in B^{\vee}(\mathcal{F})$ be such that 
\[\eta(\phi(u_0))= \inf_{\lambda \in  B^{\vee}(\mathcal{F})}\lambda(\phi(u_0)).\]
Let $\nu$ be the Haar probability measure on the compact group $H'$~\cite{Haar} and define the linear form 
\[\tau(x):=\int_{H'} \Psi^*(h)(\eta(x))d\nu(h).\] Note that $\tau(\phi(u_0))=\eta(\phi(u_0))$ because $H'$ fixes $u_0$. Also $\Psi^*(h)(\tau)=\tau$ since the Haar-measure on $H'$ is left invariant and finally $\tau\in B^{\vee}(\mathcal{F})$ because, by the previous paragraph, it is an expected value of elements of $B^{\vee}(\mathcal{F})$. As a result, if  $W^{\vee}\subseteq V^*$ denotes the subspace of linear forms $\ell$ such that $\Psi^*(h)(\ell)=\ell$ for every $h\in H'$ then the following equality holds
\[\inf_{\lambda \in  B^{\vee}(\mathcal{F})}\lambda(\phi(u_0))=\inf_{\lambda \in  B^{\vee}(\mathcal{F})\cap W}\lambda(\phi(u_0)).\]
and we conclude that the scaling constant can be computed from the right hand side as claimed.
\end{proof}
\begin{remark} Theorem~\ref{Thm: Scaling} Part $(1)$ generalizes the proof of~\cite[Theorem 1.1]{BV}.
\end{remark}

The dual point of view presented in the following Theorem is often useful for the determination of scaling constants.

\begin{definition}\label{Def: Sigmay} For $y\in B$ let $\Sigma(y)$ be the set of elements $s(u)$ which are sums of squares of elements of $\mathcal{F}$ and satisfy
\[ y-\int_X \phi(u)s(u)d\mu(u)\in {\rm span}(\overline{x}).\]
Note that $\phi$ is vector-valued and thus the integral is a vector in $V$. 
\end{definition}
\begin{theorem} \label{Thm: ScalingDual} 
The following statements hold,
\begin{enumerate}
\item{\label{Scaling} The scaling constant of $\mathcal{F}$ is given by
\[ \alpha(\mathcal{F})=\sup_{y\in B} \inf_{r\in \Sigma(y)}\int_X r(u)d\mu(u).\]
}
\item{\label{SymmScaling} If $H$ is a subgroup of the continuous automorphisms of $X$ which has enough symmetries then for any point $u_0\in X$ we have
\[\alpha(\mathcal{F})=\inf_{r\in \Sigma(\phi(u_0))}\int_X r(u)d\mu(u).\]
}
\end{enumerate}
\end{theorem}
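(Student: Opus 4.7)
The plan is to derive both parts from Theorem~\ref{Thm: Scaling} by applying conic duality to the convex program
\[f(y) := \inf_{\lambda \in B^\vee(\mathcal{F})} \lambda(y), \qquad y \in B,\]
and identifying its dual feasible set with $\Sigma(y)$. Since Theorem~\ref{Thm: Scaling}~(1) gives $\alpha(\mathcal{F}) = 1 - \inf_{y \in B} f(y)$, part~(1) reduces to proving
\[f(y) = 1 - \inf_{r \in \Sigma(y)}\int_X r(u)\, d\mu(u),\]
and part~(2) then follows by applying the same identity with $y = \phi(u_0)$ and invoking Theorem~\ref{Thm: Scaling}~(2a), which gives $\alpha(\mathcal{F}) = 1 - f(\phi(u_0))$.

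Strong duality requires Slater's condition, which holds at $\lambda_0 := g$: indeed $g(\overline{x}) = \int_X g(\phi)\, d\mu = 1$, and for any nonzero $p \in \mathcal{F}$,
\[Q_g(p,p) = \int_X g(\phi(u))\, p(u)^2\, d\mu(u) = \int_X p(u)^2\, d\mu(u) > 0\]
because $\mu$ has full support on $X$, so $Q_g$ lies in the interior of $S_+$. I would then form the Lagrangian with a scalar multiplier $t$ for the equality $\lambda(\overline{x}) = 1$ and a dual variable $s$ in the cone dual to $S_+$, which under the natural pairing of $\Sym^2(\mathcal{F})$ with $\Sym^2(\mathcal{F})^*$ is the sums-of-squares cone represented by functions $s(u) = \sum_i p_i(u)^2$ with $p_i \in \mathcal{F}$. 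Using the identity $Q_\lambda(s) = \lambda\!\left(\int_X \phi(u) s(u)\, d\mu(u)\right)$, the Lagrangian becomes
\[L(\lambda; s, t) = \lambda\!\left(y - t\,\overline{x} - \int_X \phi(u) s(u)\, d\mu(u)\right) + t,\]
whose unconstrained infimum over $\lambda \in V^*$ is finite precisely when $y - \int_X \phi\, s\, d\mu = t\, \overline{x}$, i.e.\ exactly when $s \in \Sigma(y)$ with corresponding scalar $t$. Applying $g$ to this equality and using $g(\phi) \equiv 1$, $g(\overline{x}) = 1$ pins down $t = 1 - \int_X s\, d\mu$, so the dual value equals $\sup_{s \in \Sigma(y)}\bigl(1 - \int_X s\, d\mu\bigr) = 1 - \inf_{r \in \Sigma(y)}\int_X r\, d\mu$. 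By strong duality this equals $f(y)$, proving the identity and hence part~(1); part~(2) is then immediate.

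The main delicate point is the duality itself: identifying the cone dual to $S_+ \subseteq \Sym^2(\mathcal{F})^*$ with the sums-of-squares cone in $\Sym^2(\mathcal{F})$ (the standard self-duality of the finite-dimensional PSD cone) is what legitimately converts the abstract dual variable into a function $s(u)$ whose integral $\int_X s\, d\mu$ appears in the statement. A minor tacit assumption is that $\mu(X) = 1$; for a general finite measure the same argument produces an expression rescaled by $\mu(X)$.
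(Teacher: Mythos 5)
Your proposal is correct and follows essentially the same route as the paper: both reduce to proving $\inf_{\lambda\in B^{\vee}(\mathcal{F})}\lambda(y)=\sup_{r\in \Sigma(y)}\bigl(1-\int_X r\,d\mu\bigr)$ and then deduce both parts from Theorem~\ref{Thm: Scaling}, with the equality established via Lagrangian duality for the SDP and strong duality justified by strict primal feasibility. The only (welcome) additions you make are filling in the Lagrangian computation the paper states without derivation, and exhibiting $g$ itself as the Slater point, where the paper merely asserts existence of a strictly positive $\lambda$ with $\lambda(\overline{x})=1$; your remark that $\mu$ is tacitly a probability measure is also accurate and consistent with the paper's normalizations.
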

\begin{proof} We will prove that for any $y\in B$ the equality 
\[\inf_{\lambda\in B^{\vee}(\mathcal{F})}\lambda(y)=\sup_{r\in \Sigma(y)} \left(1-\int_X r(u)d\mu(u)\right)\]
holds. Once this claim is established, both parts follow immediately from Theorem~\ref{Thm: Scaling}. To establish the claim note that $\inf_{\lambda\in B^{\vee}(\mathcal{F})}\lambda(y)$ is a semidefinite optimization problem which we will refer to as the primal problem. Its Lagrangian dual is given by
\[\sup_{(\nu,r)} \nu \text{: $y-\nu\overline{x}=\int_X\phi(u)r(u)d\mu$, $r\in \Sigma(y).$}\]
applying $g$ to the linear constraint we obtain 
\[1-\nu=g(y)-\nu g(\overline{x})=\int_X g(\phi(u))r(u)d\mu=\int_X r(u)d\mu(u)\]
and thus the dual problem is equivalent to $\sup_{r\in \Sigma(y)} \left(1-\int_X r(u)d\mu(u)\right)$.
Moreover, since $X$ is compact there exists a linear form $\lambda\in V^*$ which is strictly positive on $\phi(X)$ and satisfies $\lambda(\overline{x})=1$. Since the measure $\mu$ is supported in all of $X$ it follows that the quadratic form $Q_{\lambda}$ is strictly positive definite. As a result the primal semidefinite problem is strictly feasible and thus strong duality holds proving the claimed equality.

\end{proof}

\section{Applications}\label{Sec: Applications}
This Section contains the main results of the article. These are upper bounds for the scaling constants of BVL approximations for traveling salesman cones and for certain cones of nonnegative polynomials.
\subsection{Traveling salesman cones.}\label{Sec: TSP}
In this section we study BVL approximations of the cone over the symmetric traveling salesman polytope of an undirected hamiltonian graph $G$. We denote this cone by $STS(G)$. The cone $STS(G)$ is generated by the adjacency matrices of hamiltonian cycles on the graph $G$. Our main results are upper bounds on the scaling constants of the BVL approximation of $STS(G)$ defined by restrictions of homogeneous polynomials when $G$ is the complete graph $K_{n}$ or the complete bipartite graph $K_{n,n}$. Our upper bounds on the scaling constants for $STS(K_n)$ improve those given by Veomett in~\cite{BVstsp}. Our results on $STS(K_{n,n})$ give the first known upper bounds.

We begin by describing the cones $STS(G)$ in the setting of admissible pairs. Fix an undirected hamiltonian graph $G$ with vertices labeled $1,\dots,n$. Let $E(G)$ be its edge set, consisting of pairs $ij$ with $1\leq i<j\leq n$ for which vertex $i$ is adjacent to vertex $j$. Let $X$ be the set of hamiltonian cycles in $G$ and let $\mu$ be the uniform measure on $X$. Let $\phi: X\rightarrow \Sym^2(\RR^n)$ be the map sending a cycle to its adjacency matrix and let $V:={\rm im}(\phi)$ be the subspace spanned by the images of all cycles. Let $g:V\rightarrow \RR$ be the linear map which sends a matrix to $\frac{1}{2n}$ times the sum of its entries. Note that $C:={\rm Cone}(\phi(X))$ equals the cone $STS(G)$.

The space $\Sym^2(\RR^n)$ has a natural basis given by the $(0,1)$-matrices $e_{ij}$ in which all but the entries $ij$ and $ji$ are equal to zero. We denote its dual basis by $\hat{x}_{ij}$ and define the ring $R:=\RR[\hat{x}_{ij}: ij\in E(G)]$. Composition with $\phi$ allows us to restrict an element of the ring $R$ to a function on $X$. We denote the restriction of monomials by $x_{ij}:=\hat{x}_{ij}\circ \phi$.  The elements of $\phi(X)$ are $(0,1)$ matrices and thus the equality $x_{ij}^2=x_{ij}$ holds for every $ij\in E(G)$. It follows that for every finite set $M$ of multi-indices $\alpha=(\alpha_{ij})_{ij\in E(G)}$ and real numbers $(a_{\alpha})_{\alpha\in M}$ we have the equality
\[ \sum_{\alpha \in M} a_{\alpha} x^{\alpha} = \sum_{\alpha\in M} a_{\alpha} x^{\beta(\alpha)}\]
where $(\beta(\cdot))_{ij}$ is the support function given by
\[
\beta(\alpha)_{ij}:=
\begin{cases}
1,\text{ if $\alpha_{ij}>0$ and}\\
0,\text{ otherwise. }
\end{cases}
\]
For a multi-index $\alpha$ we let ${\rm supp}(\alpha)$ be the support graph of $\alpha$. This is the subgraph of $G$ whose adjacency matrix is given by $\beta(\alpha)$.

\begin{lemma} \label{TSPSimple} Let $\mathcal{F}_k$ be the vector space of  homogeneous polynomials of degree $k$ in $R$ restricted to $X$. The following statements hold,
\begin{enumerate}
\item{The pair $(\phi,g)$ is admissible.}
\item{ For any multi-index $(\alpha_{ij})_{ij\in E(G)}$ we have
\[\int_X x^{\alpha}d\mu=\int_X x^{\beta(\alpha)}d\mu=\frac{|\{ \text{ hamiltonian cycles $u\supseteq {\rm supp}(\alpha)$}\}|}{|\text{ hamiltonian cycles of $G$}|}\]
In particular the above integral is zero whenever ${\rm supp}(\alpha)$ is not either a hamiltonian cycle or a union of vertex-disjoint paths and isolated vertices. 
} 
\item{Let $H={\rm Aut}(G)$ be the group of graph automorphisms of $G$. The following statements hold,
\begin{enumerate}
\item{If $H$ acts transitively on $E(G)$ then $\hat{x}_{ij}(\overline{x})$ is independent of $ij\in E(G)$.}
\item{If $H$ acts transitively on the set of hamiltonian cycles of $G$ then $H$ has enough symmetries.} 
\end{enumerate}
}
\end{enumerate}
\end{lemma}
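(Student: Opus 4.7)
The overall plan is to treat each of the three parts independently, since each reduces to a straightforward combinatorial observation. For part~$(1)$, the adjacency matrix of any Hamiltonian cycle on $n$ labelled vertices has entries summing to $2n$, so $g(\phi(u))=1$ for every $u\in X$. Hence $\mathrm{aff}(\phi(X))\subseteq g^{-1}(1)\cap V$, and I would close the argument by a dimension count: $g$ is nonzero on $V$ so the right-hand side has dimension $\dim V-1$; and since $0\notin \mathrm{aff}(\phi(X))$ while $V=\mathrm{span}(\phi(X))$, the affine hull already has dimension $\dim V-1$, forcing equality.

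For part~$(2)$, the idempotency $x_{ij}^2=x_{ij}$ as functions on $X$ (because $\phi(X)$ consists of $\{0,1\}$-matrices) immediately gives $x^{\alpha}=x^{\beta(\alpha)}$ on $X$, which yields the first equality. The function $x^{\beta(\alpha)}$ is the indicator of the event that $u$ contains every edge of $\mathrm{supp}(\alpha)$, so integrating against the uniform probability measure $\mu$ produces the ratio in the statement. For the \emph{in particular} sentence, every subgraph of a Hamiltonian cycle is a disjoint union of sub-paths of that cycle (with isolated vertices regarded as degenerate paths), or the whole cycle itself; if $\mathrm{supp}(\alpha)$ is not of this form then no Hamiltonian cycle contains it and the integral vanishes.

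For part~$(3)(a)$, observe that $\hat{x}_{ij}(\overline{x})=\int_X x_{ij}\,d\mu$ equals the $\mu$-measure of $A_{ij}:=\{u\in X: ij\in u\}$. If $h\in H$ sends $ij$ to $i'j'$, then $h$ permutes $X$ bijectively and maps $A_{ij}$ onto $A_{i'j'}$; uniformity of $\mu$ yields the equality. For~$(3)(b)$ one simply checks the four conditions of Definition~\ref{Def: enoughSymm}: conditions $(1)$ and $(2)$ are immediate from the transitivity hypothesis together with the fact that $H$ permutes $X$ while $\mu$ is uniform; for $(3)$, setting $\Psi(h)(M):=P_h M P_h^{T}$ with $P_h$ the permutation matrix corresponding to the vertex permutation $h$ gives $\Psi(h)(\phi(u))=\phi(h(u))$ and hence preserves $V$; for $(4)$, precomposition with $h$ just permutes the generators $\hat{x}_{ij}$ of the polynomial ring $R$, so it sends each $\mathcal{F}_k$ into itself. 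The main obstacle is purely notational: keeping the matrix conjugation $P_h M P_h^{T}$ consistent with the edge-label action $ij\mapsto h(i)h(j)$ requires some care, but once this bookkeeping is in place the verification is routine.
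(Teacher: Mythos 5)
Your proof is correct and follows essentially the same route as the paper: part (1) by noting $g\circ\phi\equiv 1$ plus a dimension count against $V=\operatorname{span}(\phi(X))$, part (2) by idempotency of the $x_{ij}$ and the indicator interpretation, and part (3) by exhibiting the conjugation action $\Psi(h)(M)=P_hMP_h^{T}$ and checking the four conditions. The only (cosmetic) divergence is in (3a), where you compute $\hat{x}_{ij}(\overline{x})=\mu(\{u:ij\in u\})$ and use edge-transitivity directly, whereas the paper notes instead that $\overline{x}$ is $H$-fixed and then invokes edge-transitivity; both are one-line variants of the same observation.
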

\begin{proof} $(1)$ and the first equality in $(2)$ are immediate from the paragraph preceding the Lemma. The value of a monomial $x^{\alpha}$ on a hamiltonian cycle $u$ is either $1/h$ where $h$ is the number of hamiltonian cycles in $G$ or zero depending on whether the cycle $u$ contains the support graph ${\rm supp}(\alpha)$. The second equality in $(2)$ follows from this. $(3b)$ The group $H$ acts bijectively on $X$ and thus preserves the uniform measure. The action of $g\in H$ on $\Sym^2(\RR^n)$ is determined by sending $e_{ij}$ to $e_{g(i)g(j)}$ for $1\leq i<j\leq n$. This action is compatible with the action of $H$ on hamiltonian cycles.  The induced action of $H$ on the elements of $R$ is given by linear changes of coordinates and thus preserves degree and maps the set $\mathcal{F}_k$ to itself. Our assumption guarantees that condition $(2)$ of Definition~\ref{Def: enoughSymm} is satisfied. $(3a)$ The point $\overline{x}$ is fixed by $H$ since $H$ acts by permuting the hamiltonian cycles on $G$. $(3b)$ Transitivity of the action of $H$ on $X$ gives the only remaining requirement for $H$ to have enough symmetries in the sense of Definition~\ref{Def: enoughSymm}.
\end{proof}

The following Lemma provides a useful tool for computing upper bounds for scaling constants of BVL approximations of the cone $STS(G)$ using the vector spaces $\mathcal{F}_k$. Recall that the set $\Sigma(y)$ was introduced in Definition~\ref{Def: Sigmay}. 

\begin{lemma} \label{TwoValues} Assume that $H$ acts transitively on $E(G)$. Fix $u_0\in X$ and let $s\in \mathcal{F}_{2k}$ be a sum of squares of elements of $\mathcal{F}_k$. Then $s\in \Sigma(\phi(u_0))$ if and only if there exist real numbers $\alpha,\beta$ with $\alpha-\beta=1$ such that, for every $ij\in E(G)$ we have
\[ \int_X x_{ij}sd\mu = 
\begin{cases}
\alpha,\text{ if $ij\in u_0$ and}\\
\beta,\text{ if $ij\not\in u_0$.}
\end{cases}\]
\end{lemma}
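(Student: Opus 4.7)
The plan is to unfold the definition of $\Sigma(\phi(u_0))$ in coordinates and reduce it to a system of two scalar equations, after using the transitivity hypothesis to simplify $\overline{x}$.

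First I would observe that by Definition~\ref{Def: Sigmay}, the condition $s \in \Sigma(\phi(u_0))$ means there exists a real number $c$ such that $\phi(u_0) - \int_X \phi(u)s(u)\,d\mu(u) = c\,\overline{x}$ in $V \subseteq \Sym^2(\RR^n)$. Evaluating this identity on the basis functional $\hat{x}_{ij}$ for $ij \in E(G)$ gives
\[
[ij \in u_0] \;-\; \int_X x_{ij}\, s\, d\mu \;=\; c\,\hat{x}_{ij}(\overline{x}).
\]
By Lemma~\ref{TSPSimple}(3a), the transitivity of $H$ on $E(G)$ guarantees that $p := \hat{x}_{ij}(\overline{x})$ is a positive constant $p$ independent of $ij \in E(G)$ (positivity comes from the fact that, by transitivity, every edge lies in at least one hamiltonian cycle). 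For indices $ij \notin E(G)$ both sides are automatically zero, so these equations carry no content.

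Next I would read off the structure of the system. The condition above is equivalent to: the quantity $\int_X x_{ij}\,s\,d\mu$ takes a constant value $\alpha$ whenever $ij \in u_0$ and a constant value $\beta$ whenever $ij \in E(G)\setminus u_0$, with
\[
\alpha = 1 - cp, \qquad \beta = -cp,
\]
and in particular $\alpha - \beta = 1$. This proves the forward implication and exhibits the required $\alpha,\beta$.

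For the converse, given $\alpha,\beta$ with $\alpha - \beta = 1$ satisfying the two-value property, I would simply set $c := -\beta/p$ (which is well defined since $p > 0$) and verify that both cases of the displayed equation above hold: for $ij \in u_0$ we have $1 - \alpha = 1 - (\beta+1) = -\beta = cp$, and for $ij \in E(G)\setminus u_0$ we have $-\beta = cp$. Hence $\phi(u_0) - \int_X \phi(u)s(u)\,d\mu(u) = c\,\overline{x} \in \mathrm{span}(\overline{x})$, and so $s \in \Sigma(\phi(u_0))$. I do not expect any real obstacle here; the only subtle point worth flagging carefully is the positivity of $p$, which is what allows the backwards direction to solve for $c$.
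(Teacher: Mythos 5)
Your proof is correct and follows essentially the same route as the paper: unfold Definition~\ref{Def: Sigmay}, evaluate against the functionals $\hat{x}_{ij}$, and use the transitivity of $H$ on $E(G)$ to reduce to the two constants $\alpha,\beta$. Your sign conventions ($c=-\gamma$, $p=\eta$) match after relabeling, and your explicit remark that $p=\hat{x}_{ij}(\overline{x})>0$ (needed to solve for $c$ in the converse) is a small but welcome detail that the paper's division by $\eta$ leaves implicit.
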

\begin{proof} If $s\in \Sigma(\phi(u_0))$ then there is a real number $\gamma$ such that
\[\int_X \phi sd\mu=\phi(u_0)+\gamma\overline{x}\]
Evaluating the linear functional $\hat{x}_{ij}$ for $ij\in E(G)$ on both sides we obtain
\[\int_X x_{ij} sd\mu= x_{ij}(u_0) + \gamma\hat{x}_{ij}(\overline{x})\]
Since $H$ acts transitively on $E(G)$ the value $\hat{x}_{ij}(\overline{x})=\eta$ is independent of $ij\in E(G)$. It follows that the integral on the left hand side assumes only the values $\alpha:=1+\gamma\eta$, if $ij\in u_0$ and $\beta:=\gamma\eta$ if $ij\not\in u_0$ as claimed so $\alpha-\beta=1$. Conversely, for any $s$ satifying the above hypotheses we have the equality
\[ \int_X\phi sd\mu-\frac{\beta}{\eta}\overline{x}=(\alpha-\beta)\phi(u_0)=\phi(u_0)\]
and thus $s\in \Sigma(\phi(u_0))$ as claimed.
\end{proof}

\begin{cor}\label{CorTSP} Fix $u_0\in X$. If $H$ has enough symmetries and $\hat{x}_{ij}(\overline{x})=\eta$ for all $ij\in E(G)$ then
\[\alpha(\mathcal{F}_k)=1+ \inf_{(\alpha,\beta)} \frac{\beta}{\eta(\alpha-\beta)} \]
Where $\alpha$ and $\beta$ range over all pairs of real numbers $\alpha>\beta$ such that there exists a sum of squares $s\in \mathcal{F}_{2k}$ such that for every $ij\in E(G)$ we have
 \[ \int_X x_{ij}sd\mu = 
\begin{cases}
\alpha,\text{ if $ij\in u_0$ and}\\
\beta,\text{ if $ij\not\in u_0$.}
\end{cases}\]
\end{cor}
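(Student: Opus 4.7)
The plan is to combine the dual characterization in Theorem~\ref{Thm: ScalingDual} part $(\ref{SymmScaling})$ with the two-value characterization of $\Sigma(\phi(u_0))$ provided by Lemma~\ref{TwoValues}, and then show that the integral $\int_X s\, d\mu$ for $s \in \Sigma(\phi(u_0))$ can be read off directly from the pair $(\alpha, \beta)$.

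First I would apply Theorem~\ref{Thm: ScalingDual} part $(\ref{SymmScaling})$, which (using that $H$ has enough symmetries) gives
\[\alpha(\mathcal{F}_k) = \inf_{s \in \Sigma(\phi(u_0))} \int_X s\, d\mu.\]
By Lemma~\ref{TwoValues}, an element $s \in \mathcal{F}_{2k}$ which is a sum of squares lies in $\Sigma(\phi(u_0))$ iff there exist reals $\alpha, \beta$ with $\alpha - \beta = 1$ realizing the prescribed two-value integrals against the coordinate functions $x_{ij}$.

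The key step is to express $\int_X s\, d\mu$ in terms of $\beta$ and $\eta$. For this I would apply the linear functional $g$ to both sides of the defining equation $\int_X \phi\, s\, d\mu = \phi(u_0) + \gamma \overline{x}$ (with $\gamma = \beta/\eta$ as computed in the proof of Lemma~\ref{TwoValues}). Since $g \circ \phi \equiv 1$ on $X$ and $g(\overline{x}) = g(\phi(u_0)) = 1$, the left side becomes $\int_X s\, d\mu$ and the right side becomes $1 + \gamma$, yielding
\[\int_X s\, d\mu = 1 + \frac{\beta}{\eta}.\]
Combining with the previous step gives $\alpha(\mathcal{F}_k) = 1 + \inf (\beta/\eta)$, where the infimum is taken over pairs $(\alpha,\beta)$ with $\alpha - \beta = 1$ admitting a sum of squares $s \in \mathcal{F}_{2k}$ as above.

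The final step is to remove the normalization $\alpha - \beta = 1$. Given any sum of squares $s$ realizing the two-value condition with $\alpha > \beta$, the rescaled form $s/(\alpha - \beta) \in \mathcal{F}_{2k}$ is still a sum of squares and realizes the condition with new values $\alpha' = \alpha/(\alpha - \beta)$ and $\beta' = \beta/(\alpha - \beta)$ satisfying $\alpha' - \beta' = 1$. Therefore
\[\inf_{\alpha - \beta = 1} \frac{\beta}{\eta} = \inf_{\alpha > \beta} \frac{\beta}{\eta(\alpha - \beta)},\]
giving the claimed formula. The only conceptual obstacle is the third step — recognizing that one must apply $g$ to the defining equation to extract $\int_X s\, d\mu$ without computing it directly from a sum over edges — but once this is observed the rest is routine bookkeeping.
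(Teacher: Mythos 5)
Your proof is correct and follows essentially the same route as the paper: apply Theorem~\ref{Thm: ScalingDual} part $(2)$, invoke the two-value characterization of $\Sigma(\phi(u_0))$ from Lemma~\ref{TwoValues}, extract $\int_X s\,d\mu = 1+\beta/\eta$ by applying $g$ to the membership equation, and rescale $s$ by $1/(\alpha-\beta)$ to drop the normalization $\alpha-\beta=1$. The paper presents the rescaling step slightly more tersely, but the underlying argument and the key observations (including applying $g$ rather than summing over edges) are identical.
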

\begin{proof}  Since $H$ has enough symmetries we know by Theorem~\ref{Thm: ScalingDual} part $(2)$ that 
\[\alpha(\mathcal{F}_k)=\inf_{s\in \Sigma(\phi(u_0))}\int_X sd\mu.\]
By Lemma~\ref{TwoValues}, $\tau\in \Sigma(\phi(u_0))$ if and only if there exist $\alpha>\beta$ with $\alpha-\beta=1$ such that the following equality holds, 
\[ \int_X\phi \tau d\mu-\frac{\beta}{\eta}\overline{x}=\phi(u_0)\]
Applying the linear function $g$ on both sides we obtain the equality
\[ \int_X \tau d\mu =1+\frac{\beta}{\eta}.\]
If $s$ is any sum of squares satisfying the hypotheses above then letting $\tau:=\frac{1}{\alpha-\beta}s$ we see that $\tau\in \Sigma(\phi(u_0))$ obtaining the claimed bound.
\end{proof}

\begin{remark} Corollary~\ref{CorTSP} generalizes~\cite[Lemma 2]{BV} to arbitrary graphs. Moreover, we allow a sum of squares $s$ rather than only sums of  $\{0,1\}$-valued polynomials in $\phi(X)$. This extension is the main reason why we will be able to improve the scaling constants for $STS(K_n)$ found in~\cite{BVstsp}.
\end{remark}

Next we want to use Corollary~\ref{CorTSP} when $G$ is either the complete graph $K_n$ or the complete bipartite graph $K_{n,n}$. To this end we need to understand the integrals over $X$ of the monomials in $\mathcal{F}_k$. By Lemma~\ref{TSPSimple} we can restrict our attention to monomials whose support is either a disjoint union of non-closed paths and isolated vertices or a single hamiltonian cycle.

\begin{lemma}\label{Lem:PathDecom}
Let $x^{\alpha}$ be a monomial in $\mathcal{F}_k$. The following statements hold:
\begin{enumerate}
\item \cite[Lemma 2]{BV} Let $G=K_n$. If  ${\rm supp}(\alpha)$ is the union of $l\geq 1$ vertex disjoint non-overlapping paths $p_1,...,p_l$ and isolated vertices then
\begin{equation}
\int_X x^{\alpha}\ d \mu=2^{l}\frac{(n-p-1)!}{(n-1)!},
\end{equation}
where $p$ is the sum of the lengths of such paths.
\item Let $G=K_{n,n}$.  If ${\rm supp}(\alpha)$ is the union of $l\geq1$  vertex disjoint non-overlapping paths $p_1,...,p_l$ and isolated vertices then 
\begin{equation}
\int_{X}x^{\alpha}\ d\mu=2^{l-r+1}\frac{(2n-p-1)!}{n!(n-1)!}{2n-p-r \choose \frac{2n-p-r}{2}}^{-1},
\end{equation}
where $p$ is the sum of the lengths of $p_1,\dots, p_l$ and $r\leq l$ is the number of these paths which have odd length. 
\end{enumerate}
\end{lemma}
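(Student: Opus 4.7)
Both parts rest on Lemma~\ref{TSPSimple}(2), which reduces each integral to the fraction of hamiltonian cycles of $G$ whose edge set contains the support graph ${\rm supp}(\alpha)$. Thus it suffices to count the hamiltonian cycles of $G$ that contain a prescribed collection of vertex-disjoint paths. For Part (1), I would argue as in~\cite{BV}: contracting each of the $l$ paths to a single super-vertex reduces the count in $K_n$ to the count of hamiltonian cycles on the $n-p$ resulting objects (super-vertices plus isolated vertices), which is $(n-p-1)!/2$. Multiplying by $2^l$, accounting for the two orientations in which each path may appear inside the cycle, and dividing by the total $(n-1)!/2$ yields the stated formula.

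For Part (2) the bipartite alternation makes a naive contraction subtler, so I would first classify blocks by their endpoint colors: each odd path has endpoints in different color classes (an ``$AB$-block''), each even path has same-color endpoints (an ``$AA$-'' or ``$BB$-block''), and isolated $A$- and $B$-vertices are degenerate $AA$- and $BB$-blocks. Counting the $A$- and $B$-vertices occupied by the paths (using that an odd path of length $a_i$ contributes $(a_i+1)/2$ vertices of each color) shows that the number of $AA$-blocks equals the number of $BB$-blocks, with common value $m:=(2n-p-r)/2$, independently of the finer data of the support.

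The heart of the argument is the following enumerative step. Fix an orientation for each path, so that each becomes a specific directed block, and let $k$ denote the number of odd paths oriented from $A$ to $B$. Tracking the tail color of each block around the cycle identifies valid bipartite-compatible cyclic arrangements with cyclic binary sequences of length $2n-p$ that have exactly $m$ maximal runs of each color; a standard count of such sequences, multiplied by $(m!)^2\, k!\,(r-k)!$ to assign labeled blocks to slots of each type, gives the count of directed cycles realizing the chosen orientation,
\[
D_\sigma = \frac{(m!)^2\,k!\,(r-k)!}{m}\binom{m+k-1}{m-1}\binom{m+r-k-1}{m-1}.
\]
Summing over all $2^l$ orientations, noting that $\binom{r}{k}\cdot 2^{l-r}$ orientation tuples produce a given value of $k$, and applying the Chu--Vandermonde identity
\[
\sum_{k=0}^{r}\binom{m+k-1}{m-1}\binom{m+r-k-1}{m-1}=\binom{2m+r-1}{r}
\]
collapses the inner sum. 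Halving for direction and dividing by $n!(n-1)!/2$ produces the expression in the statement, using that $2m+r-1=2n-p-1$ and $\binom{2m}{m}=\binom{2n-p-r}{(2n-p-r)/2}$.

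The principal obstacle I expect is the bipartite enumerative step: setting up the bijection between valid cyclic block arrangements and binary sequences with a prescribed run-length structure, and executing the accounting of orientations carefully so that the $k$-sum collapses via Chu--Vandermonde to the stated central binomial coefficient.
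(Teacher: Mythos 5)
Your proposal is correct, and for Part (2) it takes a genuinely different combinatorial route from the paper. The paper counts Hamiltonian cycles containing $\operatorname{supp}(\alpha)$ by a direct construction: it fixes a starting even path $s_1^1\in S_1$ with a fixed orientation, sequences the elements of $S_1$ and $S_2$ alternately (giving $(|S_1|-1)!\,|S_2|!$ choices), inserts the odd paths $q_1,\dots,q_r$ into the $2|S_1|$ gaps with order within gaps (giving $\frac{(2|S_1|+r-1)!}{(2|S_1|-1)!}$ placements), and observes that the orientation of each $q_i$ is \emph{forced} by its location, so only the remaining $l-r-1$ even paths contribute a factor $2^{l-r-1}$; this gives the count in one shot with no identity needed. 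You instead fix a full orientation tuple $\sigma$ first and encode the block arrangement via tail colors as a cyclic binary sequence with $m$ maximal runs of each color, then sum over $\sigma$ and collapse the resulting $k$-sum with Chu--Vandermonde. I verified that your $D_\sigma$, namely $(m-1)!\,m!\,k!\,(r-k)!\binom{m+k-1}{m-1}\binom{m+r-k-1}{m-1}$, agrees with the count obtained by linearizing the cycle at a distinguished $AA$-block, and that after summing ($\binom{r}{k}2^{l-r}$ orientation tuples per $k$), halving for direction, and dividing by $|X|=n!(n-1)!/2$, the result reduces via $(2m)!=2m(2m-1)!$ to exactly the stated formula with $2m+r-1=2n-p-1$. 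Your route buys a cleaner symmetry (all orientations treated alike), at the cost of an extra convolution identity; the paper's route is shorter because the forced-orientation observation makes the $k$-sum unnecessary. One small caveat: both arguments implicitly assume $m=(2n-p-r)/2>0$, and the paper handles the degenerate cases ($l=r$, or $\alpha=0$) separately; your $D_\sigma$ formula divides by $m$, so you should note the same boundary cases in a final write-up.
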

\begin{proof} $(2)$ Let $V=V_1\cup V_2$ with $|V_1|=|V_2|=n$ be the bipartition of the vertex set of $K_{n,n}$.  Assume first that $l>r$. We will count the hamiltonian cycles containing the support graph of $\alpha$ by constructing sequences which represent the order in which such cycles traverse the connected components of ${\rm supp}(\alpha)$. For $i=1,2$ let $S_i$ be the set consisting of the vertices in $V_i$ which are isolated in ${\rm supp}(\alpha)$ and the paths (of even length) in ${\rm supp}(\alpha)$ with endpoints in $V_i$. Note that $|S_1|=|S_2|=\frac{2n-p-r}{2}>0$. Fix a path $s_1^1\in S_1$ with endpoints $v_1$ and $v_2$. Starting with the path $s_1^1$ construct a sequence  $s_1^1, s_1^2,s_2^1,\dots, s_{|S_1|}^1, s_{|S_2|}^2$ of distinct elements $s_i^j\in S_j$. Suppose $q_1,\dots, q_r$ are the paths of odd length in ${\rm supp}(\alpha)$. We place the $q_i$ in any order always to the right of some $s_i^j$, generating a a new sequence, say
$$
s_1^1\dashrightarrow q_{i_1}\dashrightarrow\dots \dashrightarrow q_{j_1}\dashrightarrow s_1^2\dashrightarrow\cdots \dashrightarrow s_{|S_2|}^2\dashrightarrow q_{i_{r}} \dashrightarrow \cdots \dashrightarrow q_{j_r}.
$$
Joining each pair of consecutive paths or vertices in this sequence with a single edge we will produce a hamiltonian cycle in $K_{n,n}$. Each path that belongs to some $S_i$ has two possible ways to be connected with the next element of the sequence by edges of $K_{n,n}$, if this element is a path of even length, and only one if it is an isolated vertex.  On the other hand, every path $q_i$ has its endpoints on different $V_i$ and hence can be connected by an edge of $K_{n,n}$ in only one way to the next element of the sequence. 
We conclude that, if we force the cycle to start with the path $s_1^1$ traversed from $v_1$ towards $v_2$ then the total number of hamiltonian cycles which contain the support of $\alpha$ is equal to,
$$
(\#\text{ Sequences $s_i^j$})(\#\text{ Placements of the $q_i$s })(\#\text{Ways to connect them})=
$$ 
$$
((|S_1|-1)!(|S_2|)!)\left(\frac{(2|S_1|+r-1)!}{(2|S_1|-1)!}\right)2^{l-r-1}=2^{l-r}(2n-p-1)!{2|S_1| \choose |S_1|}^{-1}.
$$
Which yields the above formula after division by $|X|=\frac{n!(n-1)!}{2}$.
The case $l-r=0, r>0$ is addressed similarly, starting with a fixed oriented path $q_1$ of odd length. Finally, if $\alpha=0$ then the formula holds trivially.\end{proof}

\begin{example}[Veomett Polynomials] Let $u_0$ be a hamiltonian cycle of $K_n$ and let $M$ be the set of all maximum size matchings of $K_n$ that are unions of edges of $u_0$. Every element of $M$ has exactly $\lfloor \frac{n}{2} \rfloor$ edges and hence the cardinality of $M$ is $2$ or $n$ depending on whether $n$ is even or odd. For each $k=1,2,\dots,\frac{n}{2}$ define the polynomials 
\begin{equation}
s_{k}(x)=\sum_{\Gamma\in M}\sum_{\substack{L\subset \Gamma\\L=k}}x^{L},
\end{equation}
where $x^{L}$ denotes the product of all $x_{ij}$ with $\{i,j\}\in L$. Veomett shows in~\cite{BVstsp} that these polynomials are elements of $\Sigma(\phi(u_0))$ and that
\begin{equation}\label{Eq:Pol_Veo_2}
\int_{X}x_{ij}s_k \ d\mu= \begin{cases}
\alpha_{k}  &\text{if } ij\in u_0,\\ \beta_k &\text{if } ij\notin u_0. 
\end{cases}
\end{equation}
where $\alpha_k$ and $\beta_k$ are defined by  
\begin{align*}
\alpha_k&=2^{k-2}\left(4 (k+2) r^2-2 (k (k+7)+4) r+3 k (k+3)\right) \frac{(r-2)!
   (2 r-k-2)!}{k!(r-k)!}, \\
 \beta_k&=2^{k-1} \left(\binom{r-2}{k-2}+4 \binom{r-1}{k}\right)  (2 r-k-2)!.
\end{align*}
if $n=2r$ and by
\begin{align*}
\alpha_k&=2^{k-2} \left(4 (k+2) r^2-2 k (k+4) r+(k-1) k+4 r\right)\frac{ (r-1)! (2r-k-1)!}{k! (r-k)!}	, \\
 \beta_k&=2^{k-2} \left(k^2 (2 r-1)-8 k r^2+6 k r+k+4 (r-1) r (2 r+1)\right) \frac{(r-2)!
   (2 r-k-1)!}{k!(r-k)!}.
\end{align*}
if If $n=2r+1$. As a result~\cite[Main Theorem]{V} the following inequality holds,
\begin{equation}
\alpha(\mathcal{F}_k)\leq1+\frac{(n-1)}{2}\frac{\beta_k}{\alpha_k-\beta_k}\leq \frac{n}{k}+\frac{10}{n}.
\end{equation} 
\end{example}

The next Theorem is an improvement of the above bounds,

\begin{theorem}\label{Thm:NewKn}  Define  $\alpha_i$ and $\beta_i$ as in (\ref{Eq:Pol_Veo_2}). For $k=1,2,\dots, \lfloor \frac{n}{2} \rfloor$ and $G=K_n$ the following inequality holds,
\begin{equation}\label{Eq:RV_Pol_1}
\alpha(\mathcal{F}_k)\leq 1+\frac{(n-1)}{2}\frac{\sum_{i=k}^{\lfloor \frac{n}{2} \rfloor\wedge 2k}{i \choose k}\beta_{i}}{\sum_{i=k}^{\lfloor \frac{n}{2} \rfloor\wedge 2k}{i \choose k}(\alpha_i-\beta_i)}\leq \frac{n}{k}+\frac{10}{n}.
\end{equation}

\end{theorem}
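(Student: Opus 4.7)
The plan is to apply Corollary~\ref{CorTSP}. Since $H = \operatorname{Aut}(K_n) = S_n$ acts transitively on the Hamiltonian cycles of $K_n$ and on the edges of $K_n$, Lemma~\ref{TSPSimple}(3) ensures that $H$ has enough symmetries, and an elementary count gives $\eta := \hat{x}_{ij}(\overline{x}) = 2/(n-1)$. It then suffices to exhibit a sum of squares $s \in \mathcal{F}_{2k}$ of elements of $\mathcal{F}_k$ whose integrals $\int_X x_{ij}\, s\, d\mu$ take two values $\alpha$ for $ij \in u_0$ and $\beta$ for $ij \notin u_0$, and to bound $\alpha(\mathcal{F}_k) \leq 1 + \tfrac{n-1}{2}\cdot \beta/(\alpha-\beta)$.

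The candidate is $s = \sum_{i=k}^{\lfloor n/2\rfloor \wedge 2k} \binom{i}{k}\, s_i$. By Equation~(\ref{Eq:Pol_Veo_2}) and linearity, its integrals against $x_{ij}$ equal $\sum_i \binom{i}{k}\alpha_i$ and $\sum_i \binom{i}{k}\beta_i$, so the first inequality of the theorem follows once $s$ is shown to be a sum of squares of elements of $\mathcal{F}_k$. Using the combinatorial identity
\[\binom{i}{k}\sum_{L\subset\Gamma,\,|L|=i}x^{L} \;=\; \sum_{L'\subset\Gamma,\,|L'|=k} x^{L'}\cdot\sum_{A\subset\Gamma\setminus L',\,|A|=i-k} x^{A},\]
we rewrite $s = \sum_{\Gamma\in M}\sum_{L'\subset\Gamma,\,|L'|=k} x^{L'}\cdot Q_{L'}$, where $Q_{L'} = \sum_{A\subset\Gamma\setminus L',\,|A|\leq k\wedge(|\Gamma|-k)} x^{A}$. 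The sum-of-squares decomposition combines the relation $(x^{L'})^2 = x^{L'}$ on $X$ with a positive semidefinite Gram-matrix argument on the size-$k$ subsets of $\Gamma$, whose entries depend only on intersection sizes; the extra degrees carried by $Q_{L'}$ are absorbed using the inclusion $\mathcal{F}_j \subseteq \mathcal{F}_k$ for $j \leq k$, which is afforded by the identity $\sum_e x_e = n$ on $X$.

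For the asymptotic inequality $\leq n/k + 10/n$, observe that $\beta_i/(\alpha_i - \beta_i)$ is decreasing in $i$ on the range $[k, 2k\wedge \lfloor n/2\rfloor]$ (readily checked from the formulas in Equation~(\ref{Eq:Pol_Veo_2})), so the ratio of weighted sums is bounded above by its value at $i=k$; combined with Veomett's estimate recalled in the example preceding the theorem, this yields $\alpha(\mathcal{F}_k) \leq 1 + \tfrac{n-1}{2}\cdot \beta_k/(\alpha_k - \beta_k) \leq n/k + 10/n$. The hardest part will be the explicit SOS decomposition: the naive attempt $\sum_{\Gamma}\bigl(\sum_{L'\subset\Gamma,\,|L'|=k} x^{L'}\bigr)^2$ produces weights $\binom{i}{k}\binom{k}{i-k}$ instead of the desired $\binom{i}{k}$, so identifying the correct Gram matrix on the size-$k$ subsets of $\Gamma$ (essentially a Johnson scheme positive semidefiniteness question) and certifying its semidefiniteness is the central technical step of the proof.
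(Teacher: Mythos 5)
Your plan follows the paper's argument closely: define $\hat{s}_k(x)=\sum_{\Gamma\in M}\bigl(\sum_{L\subset \Gamma,\ |L|=k}x^L\bigr)^2$, apply Corollary~\ref{CorTSP}, and close with Veomett's estimate. The one place you diverge is precisely where you get stuck, and in fact your discomfort is fully justified: the paper \emph{does} take the naive square and simply asserts $\hat{s}_k=\sum_{i=k}^{\lfloor n/2\rfloor\wedge 2k}\binom{i}{k}s_i$, but as you observe, this combinatorial identity is wrong. Expanding the square over the Boolean ring relations $x_e^2=x_e$, the coefficient of $x^U$ with $U\subset\Gamma$, $|U|=i$, is the number of ordered pairs $(L_1,L_2)$ of $k$-subsets of $U$ with $L_1\cup L_2=U$, which is $\binom{i}{k}\binom{k}{i-k}$, not $\binom{i}{k}$. (Already at $k=2$, $i=3$: a $3$-set has six ordered pairs of distinct $2$-subsets with union the whole set, not $\binom{3}{2}=3$.) So the paper's proof, read literally, establishes the first inequality of \eqref{Eq:RV_Pol_1} only with the weights $\binom{i}{k}\binom{k}{i-k}$ in place of $\binom{i}{k}$.

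Where I would push back on your proposal is the suggested remedy. You do not need to search for a cleverer Gram matrix on the Johnson scheme to reproduce the weights $\binom{i}{k}$: just accept the identity that the square actually gives,
\[
\hat{s}_k=\sum_{i=k}^{\lfloor n/2\rfloor\wedge 2k}\binom{i}{k}\binom{k}{i-k}\,s_i,
\]
which is manifestly a sum of squares of elements of $\mathcal{F}_k$ and lies in $\Sigma(\phi(u_0))$, and state the intermediate bound with those weights. The closing inequality $\le n/k+10/n$ is completely insensitive to the choice of positive weights, because Veomett's estimate gives $\frac{n-1}{2}\cdot\frac{\beta_i}{\alpha_i-\beta_i}\le \frac{n}{i}+\frac{10}{n}-1\le \frac{n}{k}+\frac{10}{n}-1$ for every $i\ge k$, and then for any $w_i>0$ one has $\frac{\sum w_i\beta_i}{\sum w_i(\alpha_i-\beta_i)}\le\max_i\frac{\beta_i}{\alpha_i-\beta_i}$. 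This also means you can drop the unproved monotonicity claim about $\beta_i/(\alpha_i-\beta_i)$; the per-index bound already suffices.
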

\begin{proof}
Let $u_0$ be a hamiltonian cycle of $K_n$ and let $M$ be the set of all maximum size matchings of $K_n$ that are the union of edges of $u_0$. For $k\leq \lfloor \frac{n}{2} \rfloor$ define the polynomials
$$
\hat{s}_k(x)=\sum_{\Gamma\in M}\left(\sum_{\substack{L\subset \Gamma\\|L|=k}}x^L\right)^2.
$$
Then, $\hat{s}_k$ is again a sum of squares of elements of $\mathcal{F}_k$ and coincides with the sum
$$
\hat{s}_k(x)=\sum_{i=k}^{\lfloor \frac{n}{2} \rfloor\wedge 2k}{i \choose k}s_{i}(x)\in\Sigma(\phi(u_0)),
$$
hence proving the first inequality on (\ref{Eq:RV_Pol_1}). The second inequality holds because  
$$
\frac{n-1}{2}\frac{\beta_i}{\alpha_i-\beta_i}\leq \frac{n}{k}+\frac{10}{n} -1,
$$
for any $i\geq k$.
\end{proof}

\begin{remark}
The expressions for the above bounds are rather complicated. In the case $k=1$, using computer-aided simplification, these expressions simplify to 
$$
\alpha(\mathcal{F}_1)\leq\frac{2 n}{3}+ \frac{4 n}{3 \left(3 n^2-15 n+16\right)}
$$ 
when $n$ is even and 
$$
\alpha(\mathcal{F}_1)\leq\frac{2 n}{3}-\frac{2}{5 (n-2)}+\frac{154}{45 (3 n-11)}+\frac{1}{9}
$$
when $n$ is odd. From extensive computer calculations, we conjecture the ``improvement ratio" between the bounds given by Theorem~\ref{Thm:NewKn} and those in~\cite{V} satisfies the following inequality
\begin{equation}\label{Eq:Ratio}
\frac{1+\frac{(n-1)}{2}\cdot\frac{\sum_{i=k}^{\lfloor \frac{n}{2} \rfloor\wedge 2k}{i \choose k}\beta_{i}}{\sum_{i=k}^{\lfloor \frac{n}{2} \rfloor\wedge 2k}{i \choose k}(\alpha_i-\beta_i)}}{1+\frac{(n-1)}{2}\frac{\beta_k}{\alpha_k-\beta_k}}\leq 1+\left(1-\frac{2k}{n}\right)\log\left(\frac{k+2}{k+3}\right).
\end{equation}

\begin{figure}[h]
\centering
\begin{minipage}[b]{0.45\linewidth}
 \includegraphics[scale=0.5]{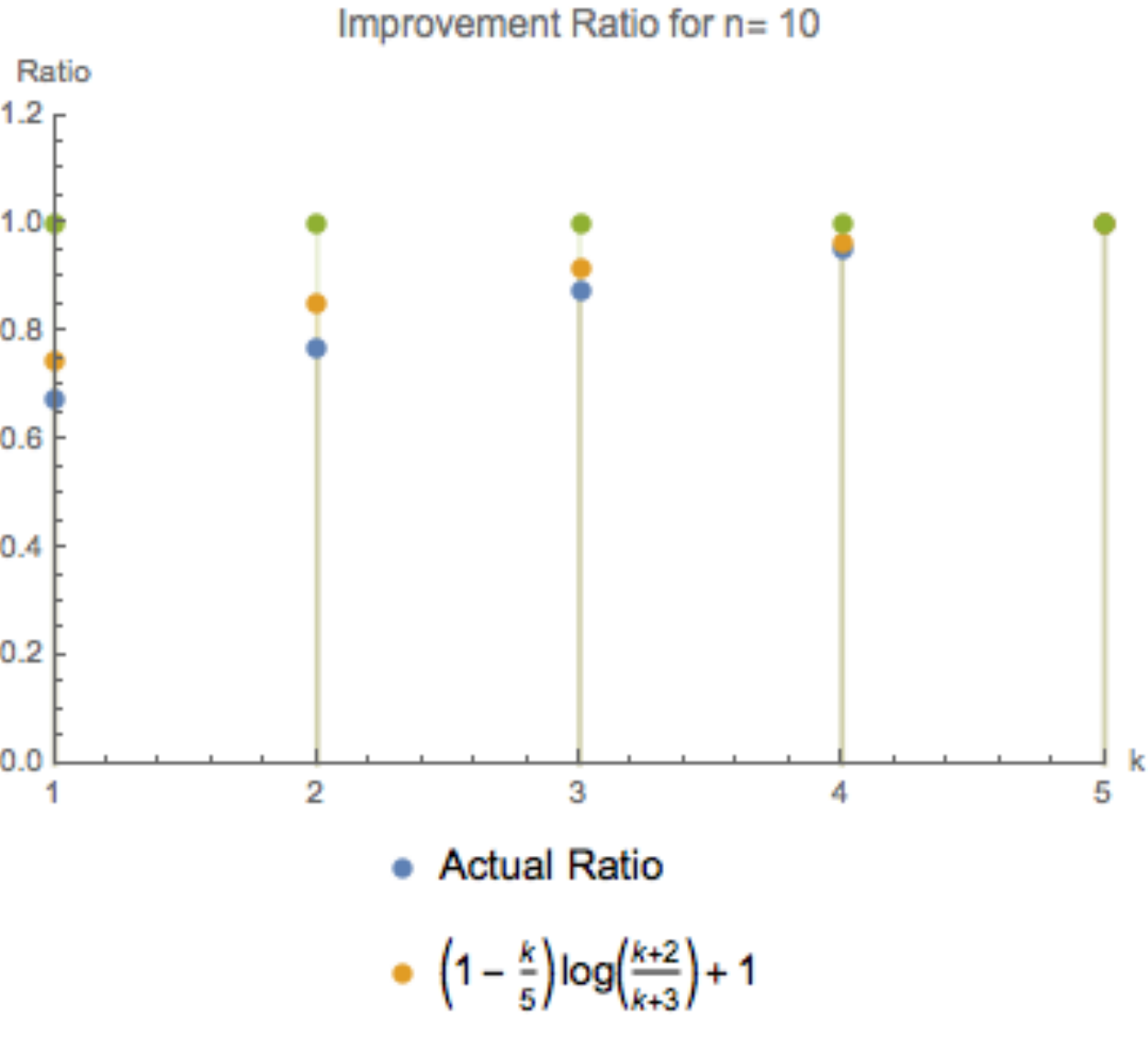}
  \label{fig:minipage1}
\end{minipage}
\quad
\begin{minipage}[b]{0.45\linewidth}
\includegraphics[scale=0.5]{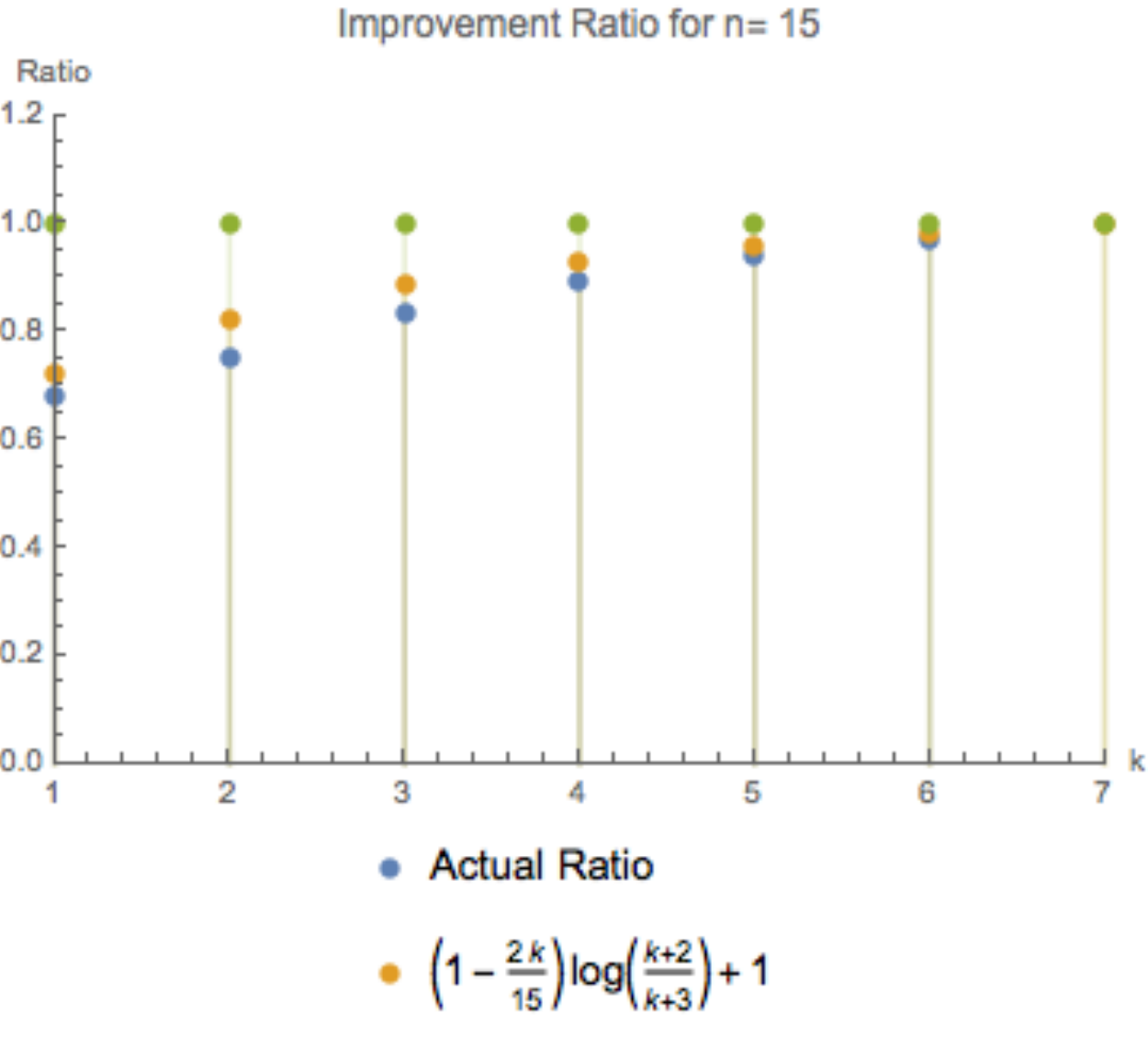}
  \label{fig:minipage2}
\end{minipage}
\end{figure}
\begin{figure}[H]
\centering
\begin{minipage}[b]{0.45\linewidth}
 \includegraphics[scale=0.5]{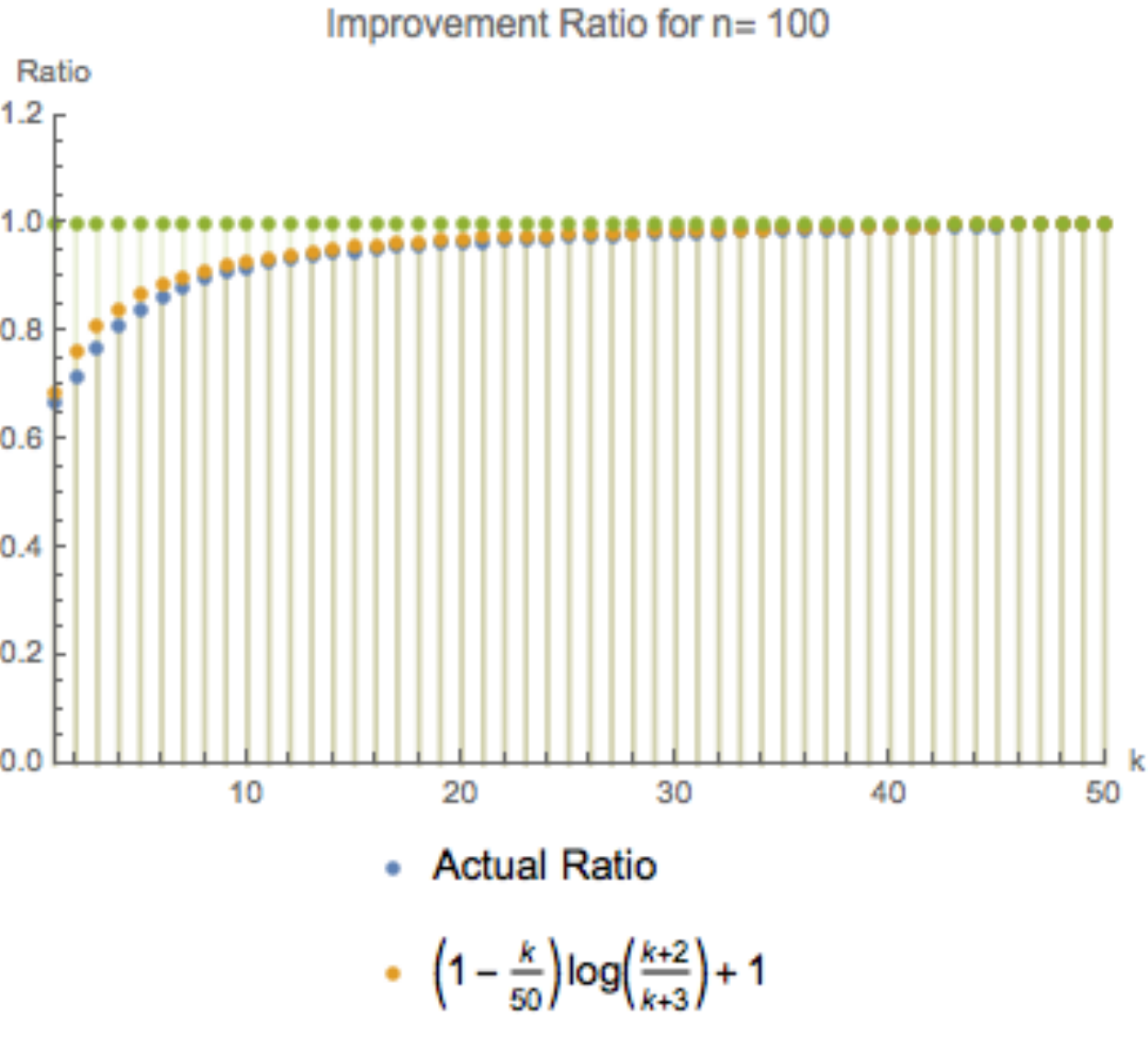}
  \label{fig:minipage1}
\end{minipage}
\quad
\begin{minipage}[b]{0.45\linewidth}
\includegraphics[scale=0.5]{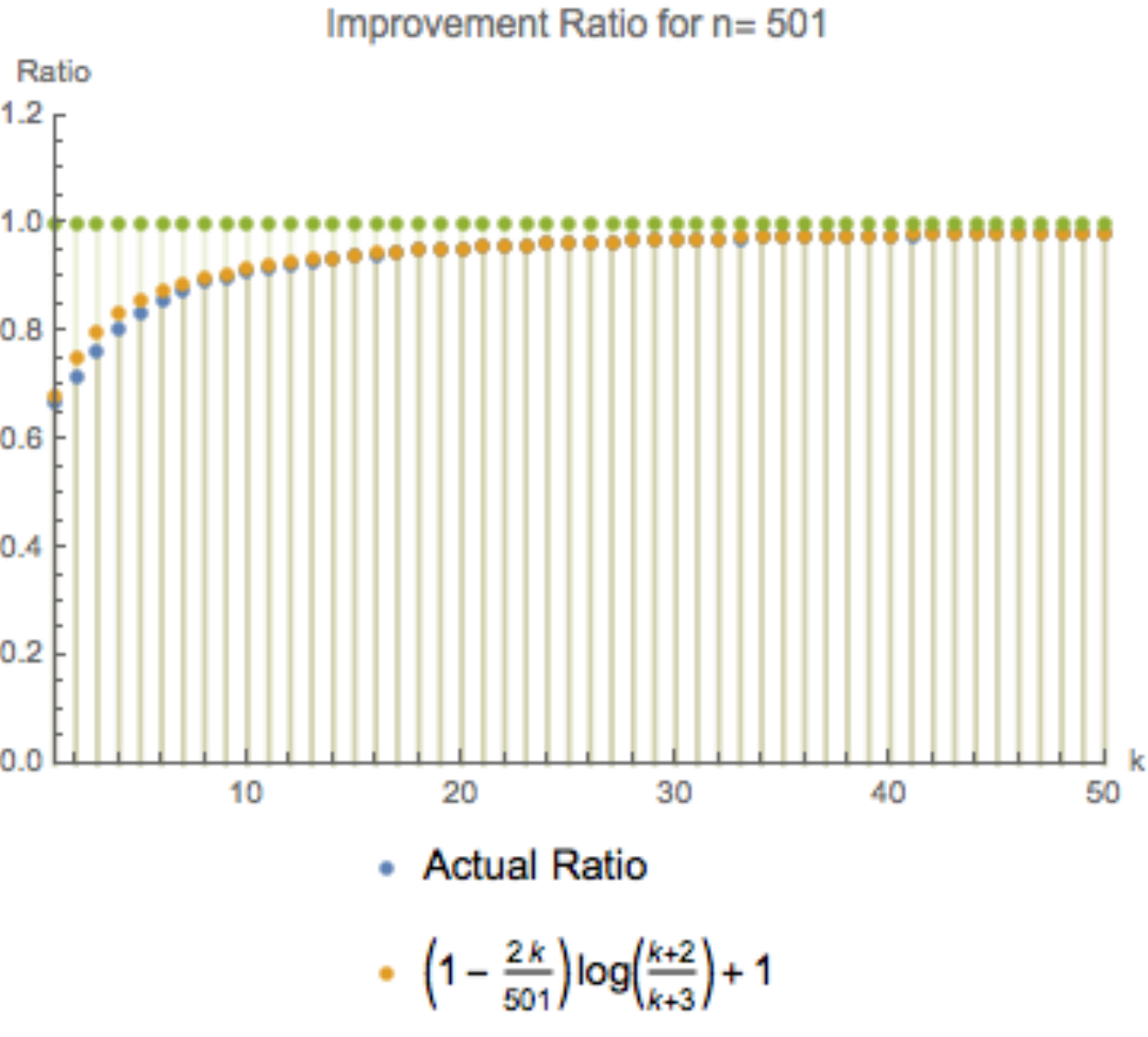}
  \label{fig:minipage2}
\end{minipage}
\end{figure}
\begin{figure}[H]
\includegraphics[scale=0.5]{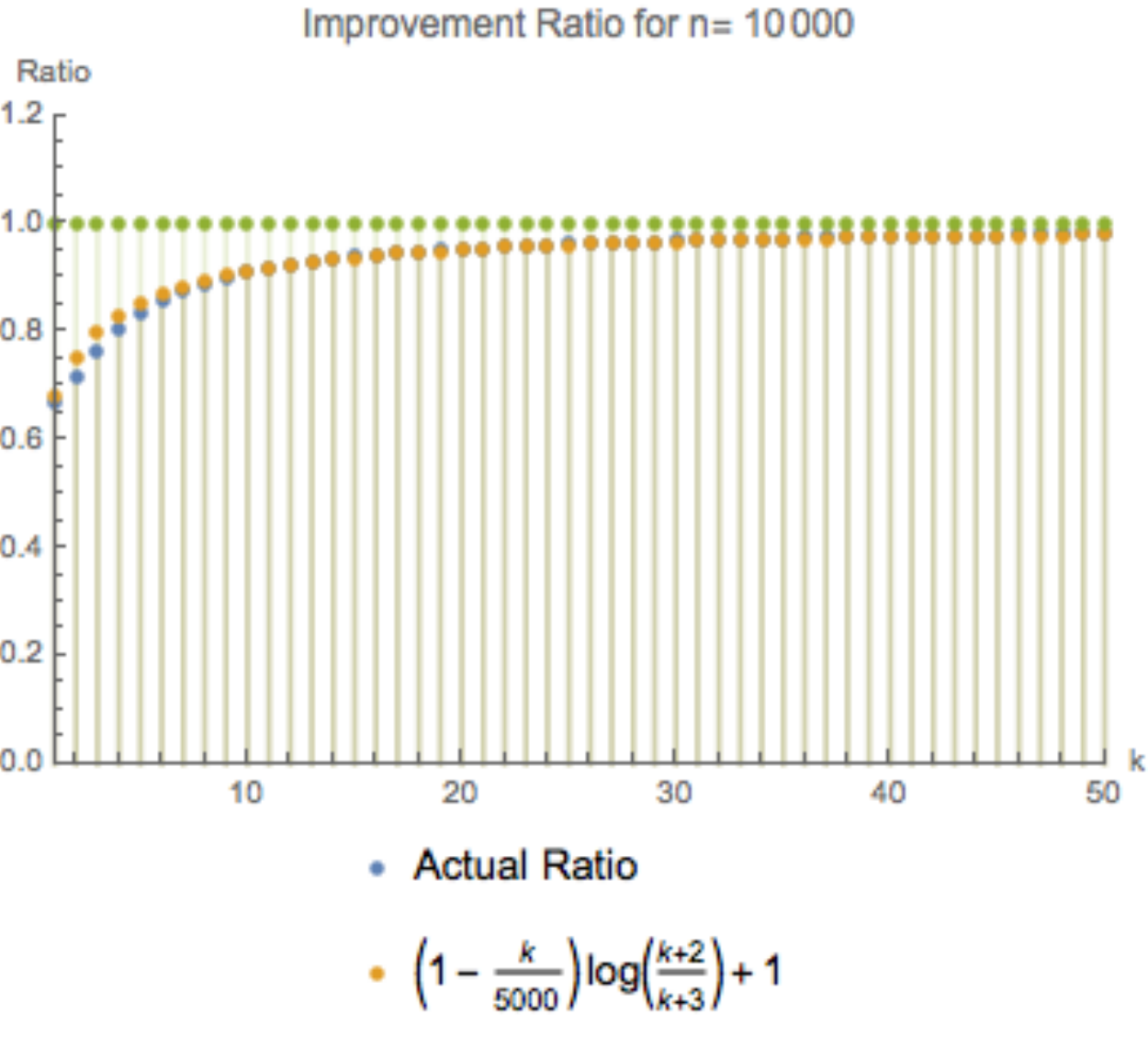}
\caption{The ratio in (\ref{Eq:Ratio}) for $n=10,15,100,501,10000$ and $k=1,\dots, 50$.}
\end{figure}
\end{remark}

Next, we focus on the case of $STS(G)$ when $G$ is the bipartite graph $K_{n,n}$

\begin{theorem}\label{Thm:Knn}
Let $G=K_{n,n}$ then for $k\in\{1,2,\dots,n\}$
$$
\alpha(\mathcal{F}_k)\leq \frac{2 n}{k}+\frac{2 (k+1)}{k (2n-k-3)}.
$$ 
\end{theorem}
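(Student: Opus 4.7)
The plan is to mirror the proof of Theorem \ref{Thm:NewKn} by constructing an explicit sum of squares $\hat{s}_k\in\Sigma(\phi(u_0))$ and then applying Corollary \ref{CorTSP}. First I would verify that the corollary is available: the automorphism group $H=\mathrm{Aut}(K_{n,n})$ acts transitively on $E(K_{n,n})$ and on Hamiltonian cycles, so by Lemma \ref{TSPSimple}(3) it has enough symmetries and $\hat{x}_{ij}(\overline{x})=\eta$ is independent of $ij$.

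The analogue of Veomett's construction in the bipartite setting is as follows. Fix a Hamiltonian cycle $u_0$ of $K_{n,n}$; since $u_0$ has even length $2n$, the set $M$ of maximum-size matchings of $K_{n,n}$ that are unions of edges of $u_0$ consists of exactly the two alternating perfect matchings of the cycle, each of size $n$. For $k\in\{1,\dots,n\}$ set
\[
\hat{s}_k(x):=\sum_{\Gamma\in M}\left(\sum_{L\subset\Gamma,\ |L|=k}x^L\right)^2,
\]
which is a sum of squares of elements of $\mathcal{F}_k$. Expanding the squares and using the identity $x_{ij}^2=x_{ij}$ to collapse repeated factors, $\hat{s}_k$ can be rewritten as $\sum_{i=k}^{n\wedge 2k}\binom{i}{k}s_i$, where $s_i(x):=\sum_{\Gamma\in M}\sum_{L\subset\Gamma,\ |L|=i}x^L$.

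The next step is to compute the two values assumed by $\int_X x_{ij}s_i\,d\mu$ when $ij\in u_0$ versus $ij\notin u_0$; call these $\gamma_i$ and $\eta_i$ respectively. This uses Lemma \ref{Lem:PathDecom}(2) applied to each monomial $x_{ij}x^L$, enumerating the support graphs that arise depending on whether $ij$ extends an endpoint of a path in $L$, lies inside a matching edge of $u_0$, or sits apart, and then summing over $\Gamma\in M$ and $L\subset\Gamma$ with $|L|=i$. This bookkeeping produces the explicit closed forms referenced as equations (\ref{Eq:Bipar1}) and (\ref{Eq:Bipar2}). With these in hand, Corollary \ref{CorTSP} (applied to $s=\hat{s}_k$ after rescaling so that the two values differ by $1$) yields the intermediate bound
\[
\alpha(\mathcal{F}_k)\leq 1+\frac{n}{2}\,\frac{\sum_{i=k}^{n\wedge 2k}\binom{i}{k}\eta_i}{\sum_{i=k}^{n\wedge 2k}\binom{i}{k}(\gamma_i-\eta_i)}.
\]

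The final step is the algebraic simplification of this ratio to the claimed closed form $\tfrac{2n}{k}+\tfrac{2(k+1)}{k(2n-k-3)}$. I would mimic the $K_n$ argument: show that for every $i\geq k$ the single-term ratio $\tfrac{n\eta_i}{2(\gamma_i-\eta_i)}$ is at most $\tfrac{2n}{k}+\tfrac{2(k+1)}{k(2n-k-3)}-1$, which then transfers to the weighted average. I expect this last step to be the main obstacle, because the factor $\binom{2n-p-r}{(2n-p-r)/2}^{-1}$ in Lemma \ref{Lem:PathDecom}(2) produces ratios of central binomial coefficients that do not simplify as cleanly as the factorial ratios in the $K_n$ case, and the parity bookkeeping (in the odd-path count $r$) requires splitting the estimate into cases according to the parities of $k$, $i$, and $n$. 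Once these per-term estimates are established, the uniform upper bound passes to the convex combination and the theorem follows.
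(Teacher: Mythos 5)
Your route is valid in outline but is harder than the one the paper actually takes, and the hard step you flag is genuinely the crux. The paper's proof of Theorem~\ref{Thm:Knn} does \emph{not} use the squared polynomial $\hat{s}_k$: it simply takes the linear sums
\[
p_k(x)=\sum_{L\subset\Gamma_1,\,|L|=k}x^L+\sum_{L\subset\Gamma_2,\,|L|=k}x^L,
\]
which are already sums of squares of elements of $\mathcal{F}_k$ because each monomial satisfies $x^L=(x^L)^2$ on $\phi(X)$. Computing $\gamma_k$ and $\eta_k$ for this single $p_k$ via Lemma~\ref{Lem:PathDecom}(2) and applying Corollary~\ref{CorTSP} gives $\alpha(\mathcal{F}_k)\le 1+\tfrac{n}{2}\tfrac{\eta_k}{\gamma_k-\eta_k}$, and a single algebraic simplification of this one ratio (not a family of per-term bounds) yields the stated closed form. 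The squared polynomial $\hat{p}_k=\sum_{i=k}^{n\wedge 2k}\binom{i}{k}p_i$ appears only in Remark~\ref{Rm:Knn}, as a strengthening that gives a tighter intermediate bound, and the paper does not re-derive the per-term estimates there either. So your plan essentially proves the remark rather than the theorem, and the price is exactly the step you identify as ``the main obstacle'': establishing $\tfrac{n\eta_i}{2(\gamma_i-\eta_i)}\le \tfrac{2n}{k}+\tfrac{2(k+1)}{k(2n-k-3)}-1$ for each $i\ge k$. You don't carry that out, and you're right to be worried: the central binomial factors $\binom{2n-p-r}{(2n-p-r)/2}^{-1}$ in the bipartite path-counting formula make these ratios significantly less tractable than the factorial ratios in the $K_n$ case, and the parity splitting on $r$ would need to be done carefully. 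The cleaner fix is to drop $\hat{s}_k$, work with $p_k$ alone, and simplify the single resulting ratio; if you want the stronger Remark bound as a bonus, you can then note $\hat{p}_k$ is a convex combination and argue as in Theorem~\ref{Thm:NewKn}, but for the theorem as stated that detour is unnecessary.
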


\begin{proof}
Let $u_0$ a hamiltonian cycle in $K_{n,n}$. This hamiltonian cycle can be partitioned into two perfect matchings $\Gamma_1$ and $\Gamma_2$. For $k=1,\dots, n$ define the polynomials
$$
p_k(x):=\sum_{\substack{L\subset \Gamma_1\\|L|=k}}x^L+\sum_{\substack{L\subset \Gamma_2\\|L|=k}}x^L,
$$

We claim that the integrals $\int_X p_kx_{ij}d\mu$ assume only two values depending on whether or not $ij\in u_0$. It follows that $p_i$ will give us to bound on $\alpha(\mathcal{F}_k)$ via Corollary~\ref{CorTSP}.

If $\{i,j\}\in \Gamma_1$ then the collection of all $L\subset \Gamma_1$ with $|L|=k$ can be partitioned into two sub-classes: The ones that contain the edge $\{i,j\}$ and the ones that do not. If $L$ lies in the first class, then the support graph of $x_{ij}x^L$ will be the union of $k$ disjoint non-overlapping paths of length $1$. If not, then such graph will be the union of $k+1$ disjoint non-overlapping paths of length $1$. Similarly, we can divide the collection of all $L\subset \Gamma_2$ with $|L|=k$ into three sub-classes: the subgraphs that do not touch the edge $\{i,j\}$, the ones that touch one endpoint of $\{i,j\}$ and the ones that touch both endpoints of $\{i,j\}$. Computing the types of supporting graphs that these three classes generate and using Lemma \ref{Lem:PathDecom}, we obtain:
\small{
$$
\gamma_k:=\frac{n!(n-1)!}{2}\int_{X}x_{ij}p_k \ d\mu= {n-1 \choose k-1}(2n-k-1)!{2n-2k \choose n-k}^{-1} + 
$$ 
\begin{equation}\label{Eq:Bipar1}
+ {n-1 \choose k}(2n-k-2)!{2(n-k-1) \choose n-k-1}^{-1}+{n-2 \choose k}(2n-k-2)!{2(n-k-1) \choose n-k-1}^{-1}+
\end{equation}

$$
+2^2{n-2 \choose k-1}(2n-k-2)!{2n-2k \choose n-k}^{-1}+{n-2 \choose k-2}(2n-k-2)!{2n-2k \choose n-k}^{-1}.
$$}
\normalsize
If $\{i,j\}$ does not belong to either of the matchings $\Gamma_i$ then this case is similar to the case $\{i,j\}\in\Gamma_1\setminus \Gamma_2$ and thus
$$
\eta_k:=\frac{n!(n-1)!}{2}\int_{X}x_{ij}p_k \ d\mu=2\left({n-2 \choose k}(2n-k-2)!{2(n-k-1) \choose n-k-1}^{-1}+
\right.$$
\begin{equation}\label{Eq:Bipar2}
\left. +2^2{n-2 \choose k-1}(2n-k-2)!{2n-2k \choose n-k}^{-1}+{n-2 \choose k-2}(2n-k-2)!{2n-2k \choose n-k}^{-1}\right).
\end{equation}
Note that $\gamma_k$ and $\eta_k$ dependent of $ij$ only on whether or not the edge belongs to the hamiltonian cycle $u_0$.
The claimed bound follows from Corollary \ref{CorTSP}.
\end{proof}

\begin{remark}\label{Rm:Knn}
Define the polynomials
$$
\hat{p_k}(x):=\left(\sum_{\substack{L\subset \Gamma_1\\|L|=k}}x^L\right)^2+\left(\sum_{\substack{L\subset \Gamma_2\\|L|=k}}x^L\right)^2=\sum_{i=k}^{n\wedge 2k}{i \choose k}p_{i}(x).
$$
The above bounds can be improved arguing as in Theorem~\ref{Thm:NewKn} yielding
$$
\alpha(\mathcal{F}_k)\leq 1+\frac{n}{2}\frac{\sum_{i=k}^{n\wedge 2k}{i \choose k}\eta_{i}}{\sum_{i=k}^{n\wedge 2k}{i \choose k}(\gamma_i-\eta_i)}\leq \frac{2 n}{k}+\frac{2 (k+1)}{k (2n-k-3)},
$$
where $\gamma_k$ and $\eta_k$ are given by equations (\ref{Eq:Bipar1}) and (\ref{Eq:Bipar2}).
\end{remark}

\begin{proof}[Proof of Theorem \ref{Thm: TSP}]  Follows immediately from Theorem~\ref{Thm:NewKn}, Theorem~\ref{Thm:Knn} and Remark~\ref{Rm:Knn}.
\end{proof}

\subsection{Nonnegative polynomials}\label{Sec: P}
Fix positive integers $n$ and $d$. In this section we study BVL approximations of the cone of homogeneous nonnegative polynomials of degree $2d$ in $n$ variables. Such approximations were considered by Lasserre in~\cite{L3} and our contribution are upper bounds on the scaling constants. The reader is referred to~\cite{KM} for basic properties of symmetric powers of vector spaces.

We begin by describing these cones in the setting of admissible pairs. Let $V:=\Sym^{2d}(\RR^n)$ and let $X:=S^{n-1}$ be the unit sphere in $\RR^{n}$ with the normalized surface measure $\mu$. Define $\phi: X\rightarrow V$ by sending a vector $v$ in the unit sphere to the element $v^{2d}\in V$. The dual space $V^*:=\Sym^{2d}(\RR^{n})^*$ is naturally identified with the space of homogeneous polynomials of degree $2d$ in $n$ variables. Under this identification the natural bilinear pairing $\Sym^{2d}(\mathbb{R}^{n})\times \Sym^{2d}(\RR^{n})^*\rightarrow \RR$ is $\langle v^{2d},p\rangle := p(v)$, the value at $v$ of the homogeneous polynomial $p$. Define $g: \Sym^{2d}(\RR^n)\rightarrow \RR$ be the unique linear map which sends an element of the form $v^{2d}$ to $|v|^{2d}$. $g$ is the linear map corresponding to the polynomial $(x_1^2+\dots+x_n^2)^{d}$ under the pairing.

\begin{lemma}\label{Lemma: basicNonnegative}  Let $\mathcal{F}_k$ be the vector space of homogeneous polynomials of degree $2k$ in $n$ variables restricted to $X$. The following statements hold:
\begin{enumerate}
\item{ The pair $(\phi,g)$ is admissible. Moreover $C^*\subseteq \Sym^{2d}(\RR^{n})^*$ is the cone of nonnegative polynomials of degree $2d$ in $n$ variables.}
\item{  The spectrahedral cone $C^*(\mathcal{F}_k)$ can be described explicitly. It is given by the homogeneous polynomials $p$ of degree $2d$ in $n$ variables which satisfy
\[C^*(\mathcal{F}_k)=\left\{ p=\sum a_{\alpha}x^{\alpha}:  \sum a_{\alpha} A_{\alpha}\succeq 0\right\}\]
where the symmetric matrix $A_{\alpha}$ has rows and columns indexed by multiindices of degree $2k$ and entries given by
\[(A_{\alpha})_{\beta_1,\beta_2}:=\int_{X}x^{\alpha+\beta_1+\beta_2}d\mu=
\begin{cases}
0,\text{ if some $\alpha_i$ is odd}\\
\frac{2\Gamma(\eta_1)\dots\Gamma(\eta_n)}{\Gamma(\eta_1+\dots+\eta_n)}
\end{cases}
\]
where $x^{\alpha+\beta_1+\beta_2}=\prod x_i^{\gamma_i}$ and $\eta_i:=\frac{\gamma_i+1}{2}$.
}

\item{ For any polynomial $p$ in $\Sym^{2d}(\RR^n)^*$  we have $\langle p,\overline{x}\rangle =\int_{S^{n-1}} p(u)d\mu(u)$.}
\item{Let $H:=O(n)$ be the group of orthogonal transformations of $\RR^{n}$. The group $H$ has enough symmetries.}
\end{enumerate}
\end{lemma}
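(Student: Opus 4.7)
The plan is to dispatch the four parts by unwinding definitions; the only nontrivial ingredient is the classical monomial integration formula on the sphere, which enters in part $(2)$.

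For parts $(1)$ and $(3)$, I would first note that $g(\phi(v))=|v|^{2d}=1$ on $S^{n-1}$, so $\phi(X)$ is contained in the affine hyperplane $g^{-1}(1)$; admissibility then reduces to the linear spanning statement that $\{v^{2d}:v\in S^{n-1}\}$ spans $V=\Sym^{2d}(\RR^n)$. This spanning follows because any degree-$2d$ polynomial $p$ with $\langle p,v^{2d}\rangle=p(v)=0$ for all $v\in S^{n-1}$ must vanish on the sphere and hence, by homogeneity, identically on $\RR^n$. The identification $C^*=\{\text{nonnegative forms}\}$ is then immediate: $\ell\in C^*$ iff $\ell(v)\geq 0$ on $S^{n-1}$, which by homogeneity is equivalent to $\ell\geq 0$ on all of $\RR^n$. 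Part $(3)$ is a one-line computation in which the linearity of the pairing and of integration are used to move the pairing inside the integral defining $\overline{x}$.

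For part $(2)$, the approach is to compute the Gram matrix of $Q_\lambda$ in the monomial basis $\{x^\beta:|\beta|=2k\}$ of $\mathcal{F}_k$. Writing $\lambda=\sum_\alpha a_\alpha x^\alpha$ and using the identification $\lambda(\phi(u))=\lambda(u)$ under the natural pairing, the $(\beta_1,\beta_2)$ entry of the Gram matrix becomes $\sum_\alpha a_\alpha\int_X x^{\alpha+\beta_1+\beta_2}d\mu$, which is precisely $\sum_\alpha a_\alpha(A_\alpha)_{\beta_1,\beta_2}$. For the entries of $A_\alpha$ themselves, I would argue the vanishing when some $\gamma_i$ is odd via the antipodal symmetry of $\mu$, and in the even case quote the standard Dirichlet-type formula for spherical monomial integrals (the computation itself being a change of variables from the sphere to the probability simplex). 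The spectrahedral description is then just Definition~\ref{Def: BVL} rewritten coordinate-wise.

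For part $(4)$, I would verify each of the four axioms of Definition~\ref{Def: enoughSymm} in turn. Invariance of the normalized surface measure under $O(n)$ and transitivity on $S^{n-1}$ are both classical. For the compatibility condition, I take $\Psi(h):=\Sym^{2d}(h)$ to be the natural symmetric power representation; it satisfies $\Psi(h)(v^{2d})=(h(v))^{2d}=\phi(h(v))$ by its very definition. Finally, for any $h\in O(n)$ and $f\in\mathcal{F}_k$, the function $f\circ h$ is again a degree-$2k$ polynomial restricted to $S^{n-1}$, since $h$ preserves the sphere, so $\mathcal{F}_k$ is stable under the action. No step presents a real obstacle; the only piece of outside technology is the Dirichlet monomial integration formula appearing in part $(2)$.
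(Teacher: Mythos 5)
Your proposal is correct and follows essentially the same route as the paper: admissibility via $g\circ\phi\equiv 1$ plus spanning of $\{v^{2d}\}$, Folland's monomial-on-sphere integral formula for part $(2)$, moving the pairing inside the integral for part $(3)$, and the symmetric-power representation $\Psi(h)=\Sym^{2d}(h)$ for part $(4)$. The only slight differences are cosmetic: you argue the spanning in $(1)$ by duality (a linear form vanishing on $\phi(S^{n-1})$ corresponds to a polynomial vanishing on the sphere, hence zero) where the paper observes directly that the coordinates of $\phi(v)$ are the degree-$2d$ monomials in $v$, and you also spell out why $C^*$ equals the nonnegativity cone, a point the paper states but does not bother to justify.
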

\begin{proof} $(1)$ Since $X$ is the unit sphere the identity $g\circ \phi=1$ holds. In coordinates the map $\phi$ sends the vector $(v_1,\dots, v_n)$ to the vector whose components are indexed by the monomials of degree $2d$ in $n$ variables $e_1^{a_1}\dots e_n^{a_n}$ with $a_1+\dots +a_n=2d$, $a_i\geq 0$ and with corresponding coefficients $\binom{2d}{a_1,\dots,a_d}v_1^{a_1}\dots v_n^{a_n}$. Since the coefficients form a basis for the space of homogeneous polynomials of degree $2d$ in the variables $v_1,\dots, v_n$, the set $\phi(X)$ is not contained in any proper affine subspace of $g^{-1}(1)$. 
$(2)$ This is obtained by using the formula of Folland~\cite{Folland} for computing integrals of monomials over spheres.
$(3)$ By definition the equality $\overline{x}=\int_{S^{n-1}} (v_1e_1+\dots v_ne_n)^{2d}d\mu$ holds. As a result, 
\[ \langle p,\overline{x}\rangle =\int_{S^{n-1}} \langle p, (v_1e_1+\dots v_ne_n)^{2d}\rangle d\mu = \int_{S^{n-1}} p(v_1,\dots,v_n)d\mu.\]
$(4)$ The action of $O(n)$ on $S^{n-1}$ is obviously measure-preserving and transitive. Any representation $W$ of a group induces new representations via its symmetric powers. In the case of the natural representation of $O(n)$ on $\RR^n$ the induced action on $\Sym^{2d}(\RR^n)$ by an element $A\in O(n)$ is given by the unique linear map which satisfies $A\cdot(v^{2d}):=(A\cdot v)^{2d}$ for every $v$. This equality proves property $(3)$ in Definition~\ref{Def: enoughSymm}. Finally, a homogeneous linear change of coordinates maps homogeneous polynomials of a given degree to homogeneous polynomials of the same degree and thus elements of $\mathcal{F}_j$ are mapped to $\mathcal{F}_j$ by elements of $H$. The claim follows.
\end{proof}

The main reason we will be able to bound the scaling constants is the fact that the stabilizer in $O(n)$ of a point of the sphere is a sufficiently large group. We will denote the stabilizer of the point $e_1$ with $S(e_1)\cong O(n-1)$. To take advantage of this fact we need to recall some basic facts about harmonic polynomials on the sphere $S^{n-1}$.

\begin{definition} Let $\Delta:=\sum_{i=1}^n \frac{\partial^2}{\partial x_i^2}$ be the Laplacian operator in $\RR^n$. A homogeneous polynomial $f\in \RR[x_1,\dots,x_n]$ is called harmonic if $\Delta f\equiv 0$. For an integer $m\geq 0$ let $\mathcal{H}_{m}$ denote the vector space of harmonic polynomials of degree $m$. 
\end{definition}
Recall~\cite[Theorem 5.7]{HFT} that any homogeneous polynomial $q$ of degree $m$ in $\RR^n$ can be written uniquely as
\[ q= p_m + s p_{m-2}+s^2p_{m-4}+\dots +s^{k}p_{m-2k}\]
where $s=x_1^2+\dots+x_n^2$, $k:=\lfloor\frac{m}{2}\rfloor$ and $p_j\in \mathcal{H}_j$.
As a result, for $m\geq 2$ we have the equality
\[\dim \mathcal{H}_m=\binom{n+m-1}{n-1}-\binom{n+m-3}{n-1}.\]
Moreover, the spaces $\mathcal{H}_m$ are orthogonal with respect to the inner product 
\[\langle f_1,f_2\rangle :=\int_{S^{n-1}} f_1(u)f_2(u)d\mu(u).\]

\begin{definition} Let $v\in S^{n-1}$ and let $Z_m(x, v)$ be the unique element of $\mathcal{H}_m$ such that for every $f\in \mathcal{H}_m$
\[ \int_{S^{n-1}} f(u)Z_m(u, v)d\mu(u)= f(v).\]
We denote by $Z_m(x)$ the special case when $v=e_1$ and call it the zonal harmonic of degree $m$. 
\end{definition}
The zonal harmonic of degree $m$ can be characterized as the unique polynomial which satisfies the following properties~\cite[Theorem 2.3]{BC},
\begin{itemize}
\item{ $Z_m(x)\in \HH_m$}
\item{$Z_m(e_1)=\dim \HH_m$}
\item{$Z_m(A\cdot u)=Z_m(u)$ for every orthogonal matrix $A$ such that $A(e_1)=e_1$.}
\end{itemize}
It follows that for any $(u_1,\dots, u_n)$ in $S^{n-1}$ the equality $Z_m(u)=h_m L_{m}(u_1)$ holds where $L_m(x_1):=\frac{1}{m!2^m}\frac{d^m}{dx_1^m}[(x_1^2-1)^m]$ is the Legendre polynomial of degree $m$ and $h_m:=\dim \HH_m$. 

\begin{proof}[Proof of Theorem~\ref{Thm: NumBounds}] By Theorem~\ref{Thm: Scaling} part $(2)$ we know that
\[ \alpha(\mathcal{F}_k)=1-\inf_{\lambda \in W^{\vee}\cap B^{\vee}(\mathcal{F}_k)} \lambda(e_1)\]
so we want the set of forms $\lambda\in \Sym^{2d}(\RR^n)^*$ which are invariant under $S(e_1)$, have an average value of one on the sphere and for which 
$\int_{X} \lambda(u)q(u)^2d\mu\geq 0$ for every homogeneous polynomial $q$ of degree $2k$. Since $\lambda$ is $S(e_1)$-invariant we can assume that $\lambda$ is a linear combination of zonal polynomials of even degree
\[ \lambda=a_0r^dZ_0(x)+\dots + a_dZ_{2d}(x).\]
By definition of Zonal polynomial the integrals $\int_X Z_{2t}(x)q_i(x)q_j(x)d\mu$ can be computed by evaluating the harmonic component of  degree $2t$ of $q_i(x)q_j(x)$ at the north pole. The harmonic components of a polynomial can be computed by using~\cite[Theorem 5.21]{HFT}. This allows us to set up a semidefinite program for the exact computation of $\alpha(\mathcal{F}_k)$. However the dimensions of the matrices in this program grow very quickly with $k$. We observe that symmetry suggests a canonical relaxation which is to require the condition $\int_{X}\lambda(x) q_i^2(x) d\mu\geq 0$ only on $S(e_1)$-invariant forms $q(x)$. The matrices in this restricted linear program only grow linearly with $k$ and the optimum of the relaxation yields an upper bound on $\alpha(\mathcal{F}_k)$. We compute the optima of these relaxations via a combination of Macaulay2 (used to decompose harmonic polynomials and write down the matrices), YALMIP (used to write down the semidefinite program) and SeDuMi (used to solve it numerically). The results of these computations are written in the statement of the Theorem.
\end{proof}

\begin{proof} [Proof of Theorem~\ref{Thm: PSDbound}] Given $b_j\in \mathbb{R}$ for $j=d+1,\dots, k$ let $q(x_1)=\sum_{j=0}^{d}h_{2j}L_{2j}(x_1)+\sum_{j=d+1}^{k} b_j L_{2j}(x_1)$. Define 
\[Q(x_1,\dots, x_n)=\sum_{j=0}^{d}Z_{2j}(x)s^{k-j}+\sum_{j=d+1}^{k} \frac{b_j}{h_{2j}} Z_{2j}(x)s^{k-j}\]
where $s:=x_1^2+\dots+x_n^2$. The polynomial $Q(x_1,\dots, x_n)$ is homogeneous of degree $2k$ and is such that $Q(u_1,\dots, u_n)=q(u_1)$ for all points $(u_1,\dots, u_n)\in S^{n-1}$. In particular, $Q$ is $S(e_1)$-invatiant and the equality $\min_{x_1\in [-1,1]}q(x_1)=\min_{u\in S^{n-1}} Q(u)$ holds. Letting $\beta:=\min_{u\in S^{n-1}} Q(u)$ we see that $Q(x)-\beta s^{k}$ is a nonnegative homogeneous polynomial of degree $2k$ fixed by the action of $S(e_1)$. By~\cite[Lemma 6.1]{BC} every nonnegative $S(e_1)$-invariant form is a sum of squares and thus $\Lambda(x):=Q(x)-\beta s^k$ is a sum of squares of forms in $\mathcal{F}_k$. 
Now let $\lambda\in B^{\vee}(\mathcal{F}_k)$ be an $S(e_1)$-invariant form. The function $\lambda(\phi(u))$ is the restriction of a unique $S(e_1)$-invariant homogeneous polynomials of degree $2d$ in $n$-variables $\sum_{j=0}^d a_j Z_{2j}(x)s^{d-j}$. We have
\[ 0\leq \int_{S^{n-1}}\lambda(\phi(u))\Lambda(u)d\mu(u)=\int_{S^{n-1}} \left(\sum_{j=0}^d a_j Z_{2j}\right)(Q(x)-\beta s^k)d\mu=\lambda(\phi(e_1))-\beta\] 
where the last equality follows from the orthogonality of harmonic polynomials of different degrees, the defining property of zonal harmonics and from the fact that $\int_{S^{n-1}}\lambda(\phi(u))d\mu(u)=1$. Since $q$ was an arbitrary element of $R_k$ we conclude that 
\[\lambda(\phi(e_1))\geq \sup_{q\in R_k}\min_{x_1\in [-1,1]} q(x_1).\] 
The claimed inequalities follow from Theorem~\ref{Thm: Scaling}.
\end{proof}
\begin{proof}  [Proof of Corollary~\ref{cor: PSD1}]  By~\cite[Answer 2]{MathOverFlow} we know that the Legendre polynomial $L_j(x_1)$ satisfies the following inequalities for all $x_1\in [-1,1]$
\[ -\frac{1}{(1-x_1^2)^{\frac{1}{4}}}\sqrt{\frac{4}{\pi(2j+1)}}\leq L_j(x_1)\leq \frac{1}{(1-x_1^2)^{\frac{1}{4}}}\sqrt{\frac{4}{\pi(2j+1)}}.\]
Bounding $q(x_1):=\sum_{j=0}^{d} h_{2j}L_{2j}(x_1)$ term by term we obtain the lower bound 
\[r(x_1):=-\frac{1}{(1-x_1^2)^{\frac{1}{4}}}\left( \sum_{j=0}^d h_{2j} \sqrt{\frac{4}{\pi(4j+1)}}\right)\] for $q(x_1)$. The function $r(x_1)$ is even and decreasing in $[0,1]$. Since $q(1)>0$, the absolute minimum of $q(x_1)$ must be achieved at a point $x_1\in [0,1]$ smaller than the biggest root of $q(x_1)$. Since the roots of the Legendre polynomials interlace we know that the largest root of $q(x_1)$ must be smaller than the largest root $\gamma$ of $L_{2d}(x_1)$. Since $r(x_1)$ is decreasing in $[0,1]$ we obtain a lower bound by for $q(x_1)$ by evaluating $r(x_1)$ at $\gamma$ as claimed. The given asymptotic formula for the largest roots is due to Gatteschi~\cite{Gat}.
\end{proof}

\begin{figure}[h]
\includegraphics[scale=0.5]{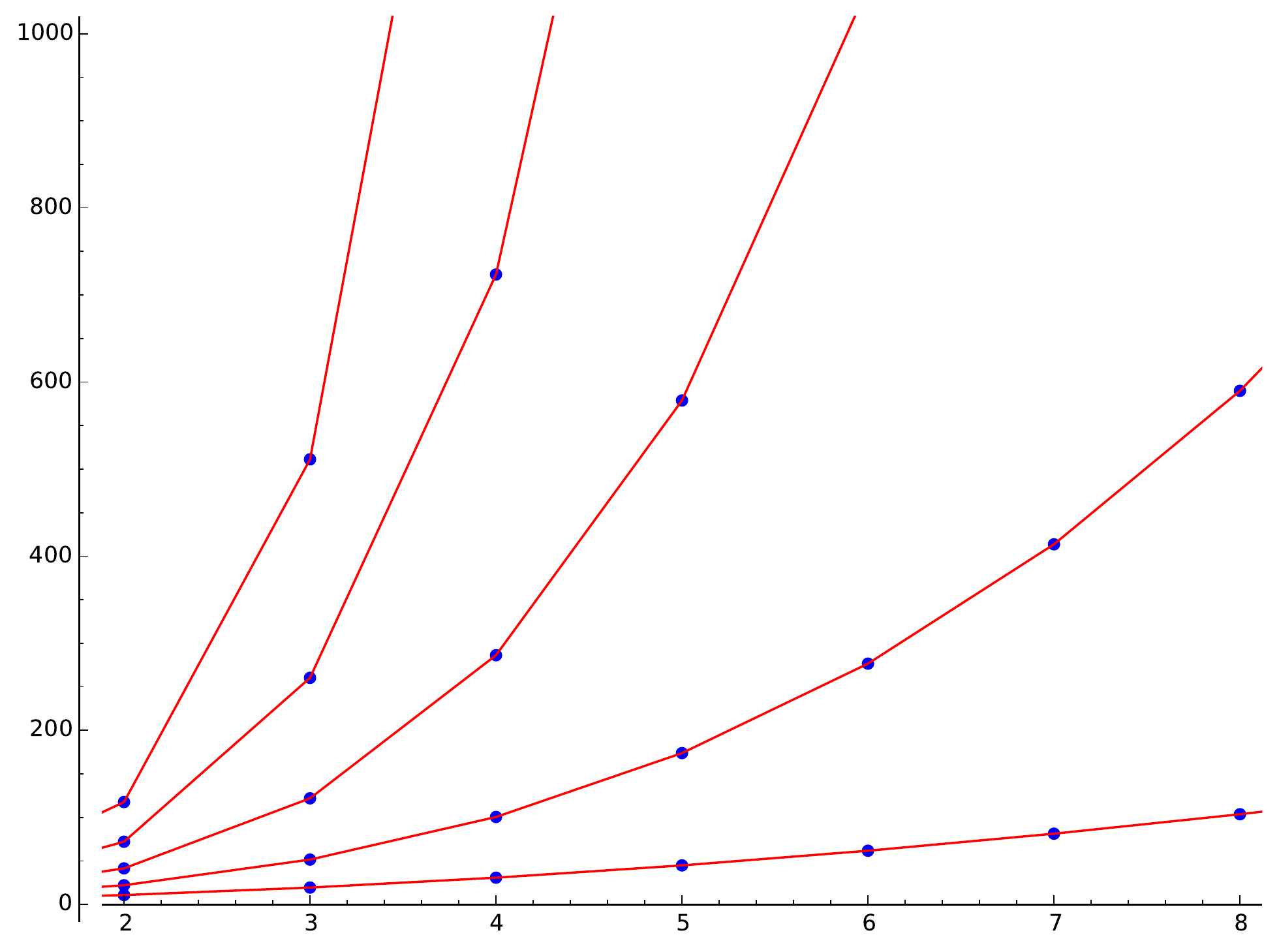}
\caption{Upper bound for the scaling constant $\alpha(\mathcal{F}_k)$ for $n=3,\dots,7$ and $k=d=2,\dots, 8$.}
\end{figure}

\begin{remark} \label{Heuristics} For any even harmonic polynomial $f$ of degree at most $d$ we know $\int_{S^{n-1}} f(u)\sum_{j=0}^{\infty} Z_{2j}(u) d\mu(u)=f(e_1)$ so that  $\sum_{j=0}^{\infty} Z_{2j}$ is the Fourier transform of the distribution $\frac{\delta_{e_1}+\delta_{-e_1}}{2}$. However the sequence of partial sums $\eta_m(x)=\sum_{j=0}^{m} Z_{2j}$ does not converge in any sense and the minimim value keeps decreasing. We conjecture (see Figure~\ref{FourierDelta}) that the minimum of this partial sum is always achieved at the largest zero of its derivative, leading to an improvement on the bound of Corollary~\ref{cor: PSD1}.

\begin{figure}[h]

\includegraphics[scale=0.5]{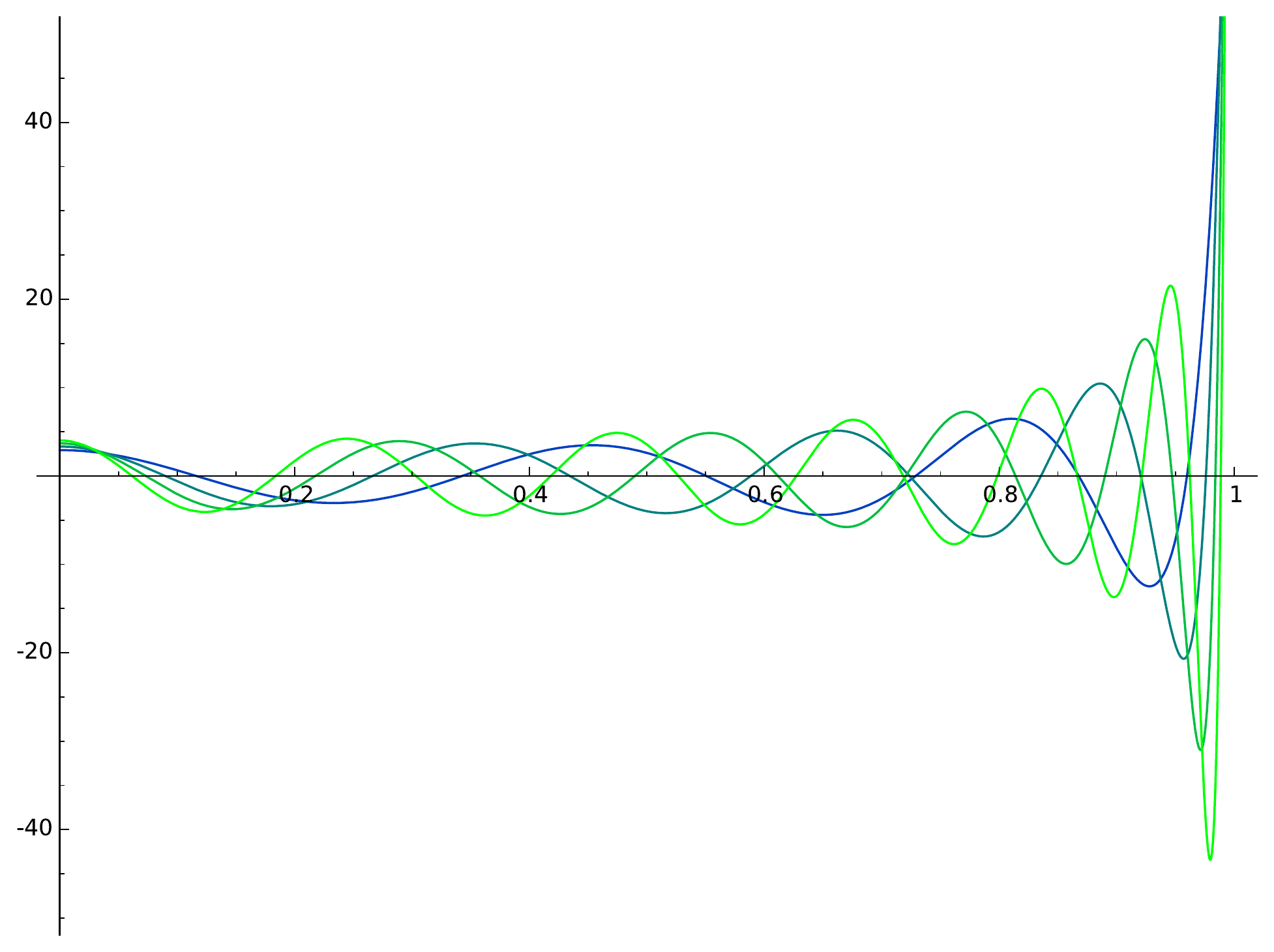}\label{FourierDelta}
\caption{Polynomials $\eta_m(x)$ for $n=3$ and $m=6,8,10,12$.}
\end{figure}
\end{remark}

\begin{remark} Work in preparation by P. Parrilo gives optimal approximations of the Dirac delta by sums of squares~\cite{PP}. Such expressions could be used to obtain potentially sharp upper bounds for scaling constants of BVL approximations of the cone of nonnegative polynomials.
\end{remark}

\end{document}